\newtheorem{theorem}{Theorem}[section]
\newtheorem{lemma}[theorem]{Lemma}
\newtheorem{corollary}[theorem]{Corollary}
\newtheorem{proposition}[theorem]{Proposition}
\newtheorem{remark}[theorem]{Remark}
\newtheorem{remarks}[theorem]{Remarks}
\newenvironment{proof}{\normalsize {\sc Proof}.}{{\hfill $\Box$%
 \hskip - \parfillskip\bigskip}}
\newcommand{\Syl}{\mathop{\rm Syl}\nolimits}
\newcommand{\Irr}{\mathop{\rm Irr}\nolimits}
\newcommand{\IBr}{\mathop{\rm IBr}\nolimits}
\newcommand{\Aut}{\mathop{\rm Aut}\nolimits}
\newcommand{\Out}{\mathop{\rm Out}\nolimits}
\newcommand{\Pic}{\mathop{\rm Pic}\nolimits}
\newcommand{\NN} {\mathbb{N}}
\newcommand{\cO} {\mathcal{O}}
\def\L{\Lambda}
\def\bigcp{\mathop{\mathchoice 
 {\hbox{\sf\Large\lower 0.1\baselineskip\hbox{Y}}}%
 {\hbox{\sf\large\lower 0.1\baselineskip\hbox{Y}}}%
 {\hbox{\sf\normalsize\lower 0.1\baselineskip\hbox{Y}}}%
 {\hbox{\sf\tiny\lower 0.1\baselineskip\hbox{Y}}}%
}}
\def\bigtimes{\mathop{\mathchoice 
 {\hbox{\sf\Large\lower 0.1\baselineskip\hbox{X}}}%
 {\hbox{\sf\large\lower 0.1\baselineskip\hbox{X}}}%
 {\hbox{\sf\normalsize\lower 0.1\baselineskip\hbox{X}}}%
 {\hbox{\sf\tiny\lower 0.1\baselineskip\hbox{X}}}%
}}
\def\Sym(#1){\mathop{\rm Sym}(#1)}
\def\Sym(#1){S_{#1}}
\def\diag(#1){\mathop{\rm diag}(#1)}
\newenvironment{enumerate*}{%
 \begin{enumerate}%
 }%
 {\end{enumerate}}
\begin{document}

\title{Morita equivalence classes of blocks with elementary abelian defect groups of order $16$ \footnote{This research was supported by the EPSRC (grant no. EP/M015548/1).}}

\author{Charles W. Eaton\footnote{School of Mathematics, University of Manchester, Manchester, M13 9PL, United Kingdom. Email: charles.eaton@manchester.ac.uk}}

\date{15th May 2019}
\maketitle


\begin{abstract}
We classify the Morita equivalence classes of blocks with elementary abelian defect groups of order $16$ with respect to a complete discrete valuation ring with algebraically closed residue field of characteristic two. As a consequence, blocks with this defect group are derived equivalent to their Brauer correspondent in the normalizer of a defect group and so satisfy Brou\'e's Conjecture.

Keywords: Donovan's conjecture; Morita equivalence; finite groups; block theory.
\end{abstract}


\section{Introduction}

Throughout let $k$ be an algebraically closed field of prime characteristic $\ell$ and let $\cO$ be a discrete valuation ring with residue field $k$ and field of fractions $K$ of characteristic zero. We assume that $K$ is large enough for the groups under consideration. We consider blocks $B$ of $\mathcal{O}G$ with defect group $D$, for finite groups $G$.

Our purpose is the description of the Morita and derived equivalence classes of (module categories for) blocks of finite groups with a given defect group. It is already known by~\cite{ekks14} that Donovan's conjecture holds for elementary abelian $2$-groups, that is, for each $n \in \mathbb{N}$ there are only finitely many Morita equivalence classes of blocks with defect group $(C_2)^n$, and so in theory Morita equivalence classes of such blocks could be classified for any given $n$. Here we consider the case $n=4$ and achieve a complete classification. The main tool is the description given in~\cite{ekks14} of the $2$-blocks with abelian defect groups of the quasisimple groups. The number of irreducible ordinary and Brauer characters of blocks with defect group $(C_2)^4$ has already been determined in~\cite{ks13} and~\cite{ekks14}. Our work continues~\cite{ea16} in which a classification is given for blocks with elementary abelian defect groups of order $8$. The Morita equivalence classes of block with Klein four defect groups are known by~\cite{er82} and~\cite{li94}. Other $\ell$-groups where there are classifications are: cyclic $\ell$-groups, where the Morita equivalence classes can be characterised in terms of Brauer trees (in work by many, for which see~\cite{li96}); abelian $2$-groups of $2$-rank at most three (see~\cite{ekks14},~\cite{el17} and~\cite{wzz17}); dihedral, semidihedral and generalized quaternion $2$-groups except where the block has two simple modules in the case that the defect group is generalized quaternion (see~\cite{er90}, and note that this classification is only known with respect to $k$); nonmetacylic minimal nonabelian $2$-groups $\langle x,y: x^{2^r}=y^{2^s}=[x,y]^2=[x,[x,y]]=[y,[x,y]]=1\rangle$, where $r \geq s \geq 1$, by~\cite{sa11} and~\cite{eks12}; $2$-groups which are a direct product of cyclic factors all of different orders, whose automorphism groups are themselves $2$-groups so the block must be nilpotent and so Morita equivalent to $\cO D$ by~\cite{BP80} and~\cite{pu88}; the remaining metacyclic $2$-groups not listed above, which force the block to be nilpotent by~\cite{cg12}. Finally, principal blocks with defect group $C_3 \times C_3$ are classified (even up to Puig equivalence) in~\cite{ko03}.

A significant challenge arises in our situation which does not arise for defect groups of order $8$ in that we must address Morita equivalences in the case of a normal subgroup of index $3$ where there are infinitely many possibilities for $N$ but the Morita equivalence class of the block of $N$ is fixed. We do so by following K\"{u}lshammer's analysis in~\cite{ku95} of possible crossed products in detail.

Before stating the main theorem, we recall the definition of a subpair and of the inertial quotient of $B$.

A $B$-subpair is a pair $(Q,b_Q)$ where $Q$ is a $p$-subgroup of $G$ and $b_Q$ is a block of $\cO QC_G(Q)$ with Brauer correspondent $B$. When $D$ is a defect group for $B$, the $B$-subpairs $(D,b_D)$ are $G$-conjugate. Write $N_G(D,b_D)$ for the stabilizer in $N_G(D)$ of $b_D$. Then the \emph{inertial quotient} of $B$ is $E=N_G(D,b_D)/DC_G(D)$, an $\ell'$-group unique up to isomorphism. To simplify further definitions, suppose that $D$ is abelian. We say $(Q,b_Q) \leq (R,b_R)$ for subpairs $(Q,b_Q)$ and $(R,b_R)$ if $Q \leq R$ and $(b_R)^{C_G(Q)}=b_Q$. If $x \in D$ and $b_x$ is a block of $C_G(x)$ we say that the Brauer element $(x,b_x) \in (D,b_D)$ if $(b_D)^{C_G(x)}=b_x$.

Note that if $D$ is abelian, then $B$ is nilpotent precisely when the inertial quotient is trivial.

The possible inertial quotients for a block with defect group $D$ are given in~\cite{ks13}, and these are: $1$ (corresponding to nilpotent blocks); $C_3$ with action as in $A_4 \times (C_2)^2$; $C_3$ consisting of $5$th powers of a Singer cycle for $\mathbb{F}_{16}$ (with only one fixed point in its action on $D$); $C_5$ consisting of $3$rd powers of a Singer cycle; $C_7$ coming from a Singer cycle for $\mathbb{F}_{8}$; $C_3 \times C_3$; $C_7 \rtimes C_3$ coming from a Singer cycle and a field automorphism of $\mathbb{F}_8$; $C_{15}$ coming from a Singer cycle of $\mathbb{F}_{16}$. Each inertial quotient apart from $C_3 \times C_3$ has trivial Schur multiplier, and in this case it is $C_3$.

We say that a block with defect group $(C_2)^4$ is of type $E$ if it has inertial quotient $E$ where there is only one possible faithful action on $(C_2)^4$, and in the case that the inertial quotient is $C_3$, we say it has type $(C_3)_1$ when the action is as in $A_4 \times (C_2)^2$ and type $(C_3)_2$ when there is only one fixed point.

\begin{theorem}
\label{maintheorem}
Let $B$ be a block of $\cO G$ with elementary abelian defect group $D$ of order $16$, where $G$ is a finite group. Then $B$ is Morita equivalent to precisely one of the following:

(a) a non-principal block of $(C_2)^4 \rtimes 3_+^{1+2}$, where the centre of $3_+^{1+2}$ acts trivially, and we note that the two non-principal blocks are Morita equivalent;

(b) the principal block of precisely one of the following:

(i) $D$;

(ii) $(C_2)^2 \times A_4$;

(iii) $(C_2)^2 \times A_5$;

(iv) $D \rtimes C_3$ of type $(C_3)_2$;

(v) $D \rtimes C_5$;

(vi) $C_2 \times ((C_2)^3 \rtimes C_7)$;

(vii) $C_2 \times SL_2(8)$ (type $C_7$);

(viii) $A_4 \times A_4$;

(ix) $A_4 \times A_5$ (type $C_3 \times C_3$);

(x) $A_5 \times A_5$ (type $C_3 \times C_3$);

(xi) $D \rtimes C_{15}$;

(xii) $SL_2(16)$ (type $C_{15}$);

(xiii) $C_2 \times ((C_2)^3 \rtimes (C_7 \rtimes C_3))$;

(xiv) $C_2 \times J_1$ (type $C_7 \rtimes C_3$);

(xv) $C_2 \times \Aut(SL_2(8))$ (type $C_7 \rtimes C_3$).

\medskip

If $B$ is a principal block, then it is Morita equivalent to one of the examples in case (b), i.e., the blocks in case (a) cannot be Morita equivalent to a principal block of any finite group.

\medskip

Blocks are derived equivalent if and only if they have the same inertial quotient and number of simple modules.
\end{theorem}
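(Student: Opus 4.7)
My approach would be a case-by-case analysis by the inertial quotient $E$, using the explicit list recalled just before the statement. For each $E$ I would exhibit a finite list of candidate Morita equivalence classes and then verify that every block with that inertial quotient is Morita equivalent to one on the list. If $E = 1$ then $B$ is nilpotent and Puig's theorem gives $B$ Morita equivalent to $\cO D$, yielding case~(b)(i). For each $E$ other than $C_3 \times C_3$ (so that $E$ has trivial Schur multiplier), my plan is to combine the classification of $2$-blocks of quasisimple groups with abelian defect from \cite{ekks14} with standard Fong--Reynolds and Clifford-theoretic reductions, arguing that $B$ is Morita equivalent either to a ``normalizer'' block of $D \rtimes E$ or to a block involving a quasisimple composition factor as listed in the theorem (the $SL_2(8)$, $SL_2(16)$ and $J_1$ entries, or products thereof with nilpotent complements).

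The main obstacle, as emphasised in the introduction, is the inertial quotient $C_3 \times C_3$, whose Schur multiplier is $C_3$. Here I would follow K\"ulshammer \cite{ku95} to analyze the crossed-product structure of $B$ over a suitable normal subgroup $N$ of index $3$ in $G$: although the class of possible $N$ is infinite, the Morita equivalence class of the relevant block of $\cO N$ is already pinned down by the preceding analysis of inertial quotient $C_3$, so the remaining task is to classify the possible $2$-cocycle twists of the resulting $C_3$-extension. The untwisted case recovers the direct product blocks of $A_4 \times A_4$, $A_4 \times A_5$ and $A_5 \times A_5$ in cases (b)(viii)--(x), while the unique non-trivial twist yields the extraspecial block of case~(a). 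A separate verification is then required to show that this twisted block cannot be Morita equivalent to a principal block of any finite group; this can be carried out by exhibiting an invariant of the source algebra (or of the associated fusion data) that detects the non-trivial cocycle, or by tracking central characters compatible with a principal-block hypothesis and obtaining a contradiction.

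Finally, for the derived equivalence statement, once the Morita classification is in hand each listed block admits a splendid Rickard equivalence with its Brauer correspondent in $N_G(D, b_D)$: for the $SL_2(q)$ and $J_1$ entries this follows from Chuang--Rouquier, Okuyama and related work, for the $p$-nilpotent-type entries it is immediate, and for products of blocks one takes tensor factors of equivalences. Therefore $B$ is derived equivalent to a block of the relevant normalizer $D \rtimes E$ (or its unique twist when $E = C_3 \times C_3$) whose number of simple modules equals $|\IBr(B)|$, and for each $E$ the blocks of this normalizer with a prescribed value of $|\IBr|$ form a single Morita equivalence class. This simultaneously establishes Brou\'e's abelian defect group conjecture for all such $B$ and gives the final ``if and only if'' clause of the theorem. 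The hard part throughout is the K\"ulshammer crossed-product analysis forced by the non-trivial Schur multiplier of $C_3 \times C_3$, and in particular the non-existence argument ruling out a principal-block realisation of the twisted block in case~(a).
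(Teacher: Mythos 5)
Your overall skeleton — reduce via the quasisimple classification of \cite{ekks14} and handle the growth of the inertial quotient from $C_3$ to $C_3\times C_3$ by K\"ulshammer's crossed-product machinery over an index-$3$ normal subgroup — does match the paper, but the way you propose to finish the crossed-product step contains the main gap. When $B$ is realised as a crossed product of a basic algebra $R$ of the block $b$ of $\cO N$ with $X=G/N\cong C_3$, there are no ``$2$-cocycle twists'' to classify: since $X$ is a cyclic $2'$-group, $H^2(X,U(Z(R)))$ vanishes (Section \ref{crossedproducts}, following \cite{ku95}), so weak equivalence classes of crossed products correspond to $\Out(R)$-conjugacy classes of homomorphisms $X\rightarrow\Out(R)$. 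The ``unique non-trivial twist'' picture is only valid in the normal defect group situation (K\"ulshammer \cite{ku85}, Lemma \ref{normaldefect}); in the general case the block of type (a) arises from a particular faithful homomorphism $C_3\rightarrow\Out(R)$, not from a cocycle. The real work, absent from your proposal, is therefore to control the image of $G/N$ in $\Out(R)$, equivalently in $\Pic(b)$, which is not known to be finite. The paper does this via Linckelmann's observation that conjugation by elements of $G$ gives trivial-source bimodules, so only the subgroup $\cT(b)$ matters, together with the explicit computation of $\cT(b)$ for the blocks of $\cO(A_4\times C_2\times C_2)$, $B_0(\cO(A_5\times C_2\times C_2))$, $\cO(A_4\times A_4)$, $B_0(\cO(A_4\times A_5))$ and $B_0(\cO(A_5\times A_5))$ using \cite{bkl17} (Proposition \ref{Picard}), which feeds into Proposition \ref{applying_cross_products}. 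Note also that $N$ need not carry a block of type $(C_3)_1$: the paper must equally treat index-$3$ extensions of blocks already of type $C_3\times C_3$ (cases (iii)--(v) of Proposition \ref{applying_cross_products}), and conversely extensions of type-$(C_3)_1$ blocks can produce $\cO D$ and the case-(a) block, so this analysis is not confined to the inertial quotient $C_3\times C_3$ alone.

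Two further points. For the remaining inertial quotients, ``standard Clifford-theoretic reductions'' hides a genuine $\cO$-versus-$k$ problem: the splitting theorem of Koshitani--K\"ulshammer \cite{kk96} for extensions by $2$-groups is available only over $k$, and the paper replaces it by Watanabe's perfect isometry results (\cite{wa00},~\cite{wa05}; Propositions \ref{watanabe_isometry} and \ref{KK}), alongside the K\"ulshammer--Puig theory of extensions of nilpotent blocks and Dade's $G[b]$, to make the reduction work over $\cO$; your sketch would at best give the classification over $k$. Finally, your suggestion to rule out a principal-block realisation of the case-(a) block by ``an invariant of the associated fusion data'' cannot succeed, since its fusion system coincides with that of the principal block of $A_4\times A_4$ (fusion does not detect the twist). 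The correct, and much simpler, argument is that the case-(a) blocks have exactly one simple module but are not nilpotent, whereas a principal block with a unique simple module is nilpotent by~\cite[6.13]{na}.
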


\begin{remarks}
(i) It will be clear from the proof of Theorem \ref{maintheorem} that blocks with defect group $(C_2)^4$ cannot be Morita equivalent to a block with non-isomorphic defect group. The same can be said of the inertial quotient, as the number of irreducible characters and number of simple modules together determine the inertial quotient.

(ii) Non-nilpotent blocks with defect group $(C_2)^4$ and just one simple module are studied in~\cite{ls16}. The structure of the centre of such a block is described there, and hopefully in the future there will be a classification-free proof that all such blocks are Morita equivalent to the blocks in part (a) of Theorem \ref{maintheorem}.
\end{remarks}

\begin{corollary}
Broue's abelian defect group conjecture holds for blocks with defect group $(C_2)^4$, that is, if $B$ is a block of $\cO G$ with such a defect group $D$, then $B$ is derived equivalent to its Brauer correspondent in $\cO N_G(D)$.
\end{corollary}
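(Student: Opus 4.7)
The plan is to apply the final assertion of Theorem~\ref{maintheorem}, which reduces derived equivalence between two blocks with defect group $(C_2)^4$ to the coincidence of two invariants: the inertial quotient and the number of simple modules. So it suffices to verify that $B$ and its Brauer correspondent $b$ in $\cO N_G(D)$ agree on both.

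The inertial quotient is preserved by Brauer correspondence essentially by definition, and both $B$ and $b$ have defect group $D$, so $b$ also falls within the scope of Theorem~\ref{maintheorem}. The real content is to show $l(B) = l(b)$. For this I would invoke the determination of the numbers of irreducible ordinary and Brauer characters for all blocks with defect group $(C_2)^4$ given in~\cite{ks13} and~\cite{ekks14} (already noted in the introduction), whose results show that $l$ depends only on the inertial quotient. Alternatively, since $D$ is normal and abelian in $N_G(D)$, K\"{u}lshammer's structure theorem for blocks with normal defect groups expresses $b$ as a twisted group algebra of $D \rtimes E$, and $l(b)$ can be read off directly from the representation theory of the corresponding twisted group algebra of $E$; for every inertial quotient $E$ listed, except possibly $C_3 \times C_3$, the Schur multiplier vanishes so no twist arises and the count is simply $|\Irr(E)|$.

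With the two invariants matching, the corollary is immediate from the last line of Theorem~\ref{maintheorem}. The main subtlety is the case $E = C_3 \times C_3$, where the Schur multiplier is nontrivial; here one verifies, by comparing with the explicit principal blocks of $A_4 \times A_4$, $A_4 \times A_5$, $A_5 \times A_5$ appearing in items (viii)--(x), that the K\"{u}lshammer twist on the Brauer correspondent is in fact trivial, so that $l(b)$ takes the value predicted by the classification. Beyond that check, the argument is purely formal: the substantive work is packaged entirely in Theorem~\ref{maintheorem} itself.
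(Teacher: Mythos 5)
Your overall strategy---deduce the corollary from the final assertion of Theorem~\ref{maintheorem} by matching the inertial quotient and the number of simple modules of $B$ with those of its Brauer correspondent $b$ in $\cO N_G(D)$, using that $b$ has normal defect group---is exactly the intended (essentially immediate) deduction, and it works verbatim for every inertial quotient other than $C_3\times C_3$, since there all listed blocks with a given inertial quotient have the same $l$. But your handling of the exceptional case contains a genuine error: it is \emph{not} true that the K\"ulshammer twist on the Brauer correspondent is trivial whenever $E\cong C_3\times C_3$. Take $G=(C_2)^4\rtimes 3_+^{1+2}$ and $B$ a non-principal block as in case (a): here $D$ is normal, $b=B$, the twist is nontrivial and $l(B)=l(b)=1$; Brou\'e's conjecture holds in this case not because the twist vanishes but because $B$ is Morita equivalent to precisely such a twisted local block. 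So the statement you propose to verify is false in half of the $C_3\times C_3$ situation, and in the other half (where $B$ is Morita equivalent to one of (viii)--(x)) the verification you sketch is not a proof: a Morita equivalence between $B$ and, say, the principal block of $\cO(A_4\times A_4)$ does not by itself control the block $b$ of $N_G(D)$, since a Morita equivalence of $B$ is not known a priori to determine the twist appearing in K\"ulshammer's description of $b$, nor $l(b)$.

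What actually closes the gap is the equality $l(B)=l(b)$ in the $C_3\times C_3$ case. By K\"ulshammer's theorem (\cite{ku85}, as used in Lemma~\ref{normaldefect}), $b$ is Morita equivalent to a twisted group algebra $\cO_\gamma(D\rtimes E)$ where $\gamma$ is the class attached to the maximal $B$-subpair $(D,b_D)$, so $l(b)=9$ or $1$ according as $\gamma$ is trivial or not; and the determination of $k(B)$ and $l(B)$ for these blocks in~\cite{ks13} and~\cite{ekks14} gives the same dichotomy for $l(B)$ (equivalently, Alperin's weight conjecture holds for these blocks), whence $l(B)=l(b)$ in both subcases. With that in hand, your appeal to the last sentence of Theorem~\ref{maintheorem}, together with Lemma~\ref{normaldefect} placing $b$ among the listed blocks with the same inertial quotient as $B$, does complete the argument; you should also delete the earlier claim that $l$ ``depends only on the inertial quotient,'' which fails exactly in this case.
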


\begin{remark}
We cannot say at present whether the blocks are splendid equivalent as we cannot say anything about the sources of the bimodules giving the Morita equivalences in Theorem \ref{maintheorem}.
\end{remark}

The paper is structured as follows. In Section \ref{prelim} we give many of the miscellaneous preliminary results necessary for the proof of Theorem \ref{maintheorem}. In Section \ref{crossedproducts} we give the necessary background on crossed products and Picard groups, and analyse possible crossed products in situations that will arise in the proof of the main theorem. In Section \ref{proofofmaintheorem} we prove Theorem \ref{maintheorem}.


\section{Preliminary results}
\label{prelim}

Let $G$ be a finite group, $N \lhd G$ and let $b$ be a $G$-stable block of $\cO N$. The normal subgroup $G[b]$ of $G$ is defined to be the group of elements of $G$ acting as inner automorphisms on $b \otimes_{\cO} k$. We first collect some results concerning $G[b]$ that will be used when considering automorphism groups of simple groups.

\begin{proposition}
\label{innerautos}
Let $G$ be a finite group and $B$ a block of $\cO G$ with defect group $D$. Let $N \lhd G$ with $D \leq N$ and suppose that $B$ covers a $G$-stable block $b$ of $\cO N$. Let $B'$ be a block of $\cO G[b]$ covered by $B$. Then

(i) $b$ is source algebra equivalent to $B'$, and in particular has isomorphic inertial quotient;

(ii) $B$ is the unique block of $\cO G$ covering $B'$.
\end{proposition}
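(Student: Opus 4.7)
The plan is to combine two foundational results about the subgroup $G[b]$, both coming out of K\"ulshammer's analysis in~\cite{ku95}: a source algebra theorem of Puig that applies when a group acts by inner automorphisms on a block of a normal subgroup, and a bijection between blocks of $\cO G$ covering $b$ and $G$-orbits of blocks of $\cO G[b]$ covering $b$.

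For (i) I would use the defining property of $G[b]$, namely that $G[b]/N$ acts as inner automorphisms on $b \otimes_\cO k$. Puig's theorem (as exposited in~\cite{ku95}) then says that any block of $\cO G[b]$ covering such a $b$ is source algebra equivalent to $b$; applied to $B'$ this gives the first part of (i). The second part is automatic, since a source algebra equivalence preserves the defect group together with its fusion system, and the inertial quotient is determined by the fusion system.

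For (ii) I would invoke the second result: blocks of $\cO G$ covering $b$ correspond bijectively to $G$-orbits of blocks of $\cO G[b]$ covering $b$, via the rule sending a covering block to the set of blocks of $\cO G[b]$ it covers. In particular this correspondence is injective, so two blocks of $\cO G$ covering a common block of $\cO G[b]$ must coincide. Thus $B$ is the unique block of $\cO G$ covering $B'$, proving (ii).

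The main task is to pin down the precise form of these two results in~\cite{ku95} (or in Puig's earlier work on pointed groups); the essential content is the defining property of $G[b]$, which is exactly engineered to make both statements work. One subtlety worth checking is that the source algebra equivalence in (i) really does hold over $\cO$ rather than only over $k$, so that one obtains an honest Morita equivalence of $\cO$-algebras.
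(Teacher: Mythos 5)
Your proposal matches the paper's proof in substance: both parts are exactly the quoted facts about $G[b]$, namely that a block of $\cO G[b]$ covering $b$ is source algebra equivalent to $b$ (the paper cites~\cite[2.2]{kkl12}, with the passage from $k$ to $\cO$ --- the subtlety you rightly flag --- handled by~\cite[7.8]{pu88}), and that each such block is covered by a unique block of $\cO G$ (the paper cites Dade,~\cite[3.5]{da73}, rather than~\cite{ku95}). So the approach is essentially the same; only the precise attributions differ.
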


\begin{proof}
Part (i) is~\cite[2.2]{kkl12}, noting that a source algebra equivalence over $k$ implies one over $\cO$ by~\cite[7.8]{pu88}. Part (ii) follows from~\cite[3.5]{da73}.
\end{proof}

The following is a distillation of those results in~\cite{kp90} which are relevant here.

\begin{proposition}[\cite{kp90}]
\label{kp}
Let $G$ be a finite group and $N \lhd G$. Let $B$ be a block of $\cO G$ with defect group $D$ covering a $G$-stable nilpotent block $b$ of $\cO N$ with defect group $D \cap N$. Then there is a finite group $L$ and $M \lhd L$ such that (i) $M \cong D \cap N$, (ii) $L/M \cong G/N$, (iii) there is a subgroup $D_L$ of $L$ with $D_L \cong D$ and $D_L \cap M \cong D \cap N$, and (iv) there is a central extension $\tilde{L}$ of $L$ by an $\ell'$-group, and  a
 block $\tilde{B}$ of  $\cO \tilde L$ which is Morita equivalent to $B$ and has defect group $\tilde{D} \cong D_L \cong D$.

If $B$ is the principal block, then $\tilde{B}$ is the principal block.
\end{proposition}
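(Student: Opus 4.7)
The plan is to invoke the structural analysis of Külshammer–Puig on blocks that cover a nilpotent block of a normal subgroup, whose proof I would follow in outline. The starting point is Puig's theorem on source algebras of nilpotent blocks: since $b$ is nilpotent with defect group $D \cap N$, there is a source idempotent $i \in (\cO Nb)^{D\cap N}$ whose source algebra $i\cO N i$ is isomorphic as an interior $(D\cap N)$-algebra to $\cO(D\cap N)$. Thus the algebra structure of $b$ is faithfully encoded by the group algebra of its defect group, together with a $G/N$-action on $\cO(D\cap N)$ known only up to inner automorphisms.

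Next, I would construct $L$ by combining this outer action with $D\cap N$ itself. Let $H$ be the stabilizer in $G$ of the $N$-conjugacy class of $i$; since $b$ is $G$-stable, a Frattini-type argument gives $H/N \cong G/N$. Conjugation of $H$ on $i\cO N i \cong \cO(D\cap N)$ yields, modulo the inner automorphisms effected by $D\cap N$, a homomorphism $G/N \to \Out(D\cap N)$. Combining this with the natural inclusion of $D\cap N$ as inner automorphisms, one builds a group extension $1 \to M \to L \to G/N \to 1$ with $M \cong D\cap N$, giving (i) and (ii). A preimage $D_L$ in $L$ of the image of $D$ in $G/N$ then satisfies $D_L \cap M \cong D\cap N$ and $D_L \cong D$, since $D$ is itself an extension of $D/(D\cap N)$ by $D\cap N$ with matching action, yielding (iii).

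For (iv) the subtle point is that the twisted construction $\cO M *_\alpha (G/N)$ built from the above data differs from $B$ by a $2$-cocycle $\alpha$ with values in $\cO^\times$. The $\ell$-part can be absorbed by inner automorphisms of the defect group; the residual $\ell'$-obstruction is killed by pulling $L$ back along a central extension $\tilde L \to L$ by a sufficiently large cyclic $\ell'$-group $Z$, so that the inflation of $\alpha$ becomes a coboundary. Then $\tilde B$ is cut out from $\cO \tilde L$ by the central character of $Z$ that matches $\alpha$, and the tensor-product machinery of Külshammer–Puig identifies $\tilde B$ with $B$ up to Morita equivalence. Since Morita equivalent blocks have isomorphic defect groups, $\tilde D \cong D_L \cong D$. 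I expect the main obstacle to be the cocycle bookkeeping — in particular verifying precisely which $\ell'$-extension suffices and that the resulting Morita equivalence is genuine rather than merely character-theoretic. For the principal block statement: if $B$ is principal then so is $b$ (as the trivial module lies in $b$), and the source-idempotent construction sends principal blocks to principal blocks, so $\tilde B$ is the principal block of $\cO \tilde L$.
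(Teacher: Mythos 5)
The paper does not reprove the K\"ulshammer--Puig theorem: parts (i)--(iv) of the statement are simply extracted from~\cite{kp90}, following the guidance given in~\cite[2.2]{ea16}, and the only content of the paper's proof is the final claim about principal blocks. Your proposal instead sketches a proof of the cited theorem itself, and that sketch has genuine gaps. First, your starting point misquotes Puig's structure theorem: the source algebra of the nilpotent block $b$ is isomorphic as an interior $(D\cap N)$-algebra to $S\otimes_\cO \cO(D\cap N)$, where $S=\End_\cO(V)$ for an endopermutation $\cO(D\cap N)$-module $V$ with vertex $D\cap N$ (see~\cite{pu88}), not to $\cO(D\cap N)$ itself; $V$ is trivial only in special situations such as principal blocks. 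This is not cosmetic: in~\cite{kp90} the matrix factor $S$, and the problem of extending the $G/N$-action to it, is exactly where the twisted group algebra, the central $\ell'$-extension $\tilde L$ and the cohomological obstruction arise, so your ``cocycle bookkeeping'' is premised on a simplification that removes the main difficulty. Second, the step producing a homomorphism $G/N\to\Out(D\cap N)$ conflates algebra automorphisms of $\cO(D\cap N)$ modulo inner automorphisms with group-theoretic outer automorphisms of $D\cap N$; these are very different groups, and the construction of the extension $1\to M\to L\to G/N\to 1$, and in particular the verification of (iii) (that $L$ contains a subgroup $D_L\cong D$ meeting $M$ in $D\cap N$), requires the local analysis of~\cite{kp90} rather than the assertion that the extension classes ``match''.

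For the principal-block addendum --- the only part the paper actually proves --- your argument is an assertion (``the source-idempotent construction sends principal blocks to principal blocks'') rather than a proof. The paper's argument is short but concrete: if $B$ is principal then so is $b$; a principal nilpotent block forces $N$ to have a normal $\ell$-complement, so $O_{\ell'}(N)$ lies in the kernel of $B$, and after factoring it out the construction visibly returns the principal block $\tilde B$ of $\cO\tilde L$. Some argument of this kind is needed, since for nonprincipal inputs the construction genuinely produces nonprincipal blocks of central extensions, so the claim does not follow formally from the construction itself.
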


\begin{proof}
Guidance on the extraction of these results form~\cite{kp90} is given in~\cite[2.2]{ea16}. It remains to prove the claim regarding the principal block.  Note that if $B$ is the principal block, then $b$ is also principal and so $N$ has a normal $\ell$-complement. Then $O_{\ell'}(N)$ lies in the kernel of $B$ and the corresponding $\tilde{B}$ is the principal block.
\end{proof}

Recall that a block of a finite group $G$ is \emph{quasiprimitive} if every block of every normal subgroup that it covers is $G$-stable under conjugation.

\begin{corollary}
\label{kpcor}
Let $G$ be a finite group and $N \lhd G$ with $N \not\leq Z(G)O_{\ell}(G)$. Let $B$ be a quasiprimitive block of $\cO G$ with defect group $D$ covering a nilpotent block $b$ of $\cO N$. Then there is a finite group $H$ with $[H:O_{\ell'}(Z(H))] <  [G:O_{\ell'}(Z(G))]$ and a block $B_H$ with defect group $D_H \cong D$ such that $B_H$ is Morita equivalent to $B$.
\end{corollary}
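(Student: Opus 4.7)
The plan is to invoke Proposition \ref{kp} directly to produce the required block on a smaller group $H$, and then compare $[H : O_{\ell'}(Z(H))]$ against $[G : O_{\ell'}(Z(G))]$, using the hypothesis $N \not\leq Z(G)O_\ell(G)$ to force strict inequality.

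As a preliminary reduction, I would replace $N$ by $N^* := N \cdot O_{\ell'}(Z(G))$ in order to assume $O_{\ell'}(Z(G)) \leq N$. This is legitimate: $N^* \not\leq Z(G)O_\ell(G)$ because $O_{\ell'}(Z(G)) \leq Z(G)$ while $N \leq N^*$; quasiprimitivity of $B$ ensures that $B$ covers a unique (hence $G$-stable) block $b^*$ of $\cO N^*$; and because $N^*/N$ is central of $\ell'$-order, $b^*$ covers $b$, has defect group $D \cap N^* = D \cap N$ (every $\ell$-element of $N^*$ lies in $N$), and remains nilpotent.

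Now apply Proposition \ref{kp} to the pair $(G, N^*)$ to obtain a group $\tilde L$ and a block $\tilde B$ of $\cO \tilde L$ that is Morita equivalent to $B$ with defect group $\tilde D \cong D$. By construction, $\tilde L$ is a central extension of $L$ by an $\ell'$-group $Z$, with $|L| = |D \cap N| \cdot |G/N|$. Setting $H = \tilde L$, $B_H = \tilde B$, $D_H = \tilde D$, the containment $Z \leq O_{\ell'}(Z(H))$ gives
$$[H : O_{\ell'}(Z(H))] \;\leq\; [H : Z] \;=\; |D \cap N| \cdot |G/N|.$$

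It remains, and this is the only substantive step, to establish $|D \cap N| \cdot |G/N| < [G : O_{\ell'}(Z(G))]$, equivalently $|N| > |D \cap N| \cdot |O_{\ell'}(Z(G))|$. Since $D \cap N$ and $O_{\ell'}(Z(G))$ have coprime orders and the latter is central in $N$, their product is a subgroup of $N$ of order exactly $|D \cap N| \cdot |O_{\ell'}(Z(G))|$, giving the weak inequality for free. The main obstacle is ruling out equality: if $N = (D \cap N) \cdot O_{\ell'}(Z(G))$, then $N$ is a direct product of $D \cap N$ with a central $\ell'$-subgroup, so $D \cap N$ is the unique Sylow $\ell$-subgroup of $N$ and is therefore characteristic in $N$, hence normal in $G$ and contained in $O_\ell(G)$. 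This forces $N \leq O_\ell(G) \cdot Z(G) = Z(G)O_\ell(G)$, contrary to hypothesis, so the strict inequality holds.
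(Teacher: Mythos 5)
Your argument is correct and follows essentially the same route as the paper: enlarge $N$ by a central subgroup so that the relevant central $\ell'$-part lies inside it, apply Proposition \ref{kp}, and bound $[\tilde L:O_{\ell'}(Z(\tilde L))]$ by $|L|=[G:N]\,|D\cap N|$, with the hypothesis $N\not\leq Z(G)O_\ell(G)$ forcing the strict inequality. The only differences are cosmetic: the paper absorbs all of $Z(G)$ into $N$ (so it needs nilpotency of the covered block of $\cO Z(G)N$, including the central $\ell$-part) rather than just $O_{\ell'}(Z(G))$, and it leaves implicit the strictness argument (equality would make $D\cap N$ the normal Sylow $\ell$-subgroup of $N$, hence $D\cap N\leq O_\ell(G)$ and $N\leq Z(G)O_\ell(G)$) which you spell out.
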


\begin{proof}
Let $b'$ be the block of $\cO Z(G)N$ covered by $B$ and covering $b$. Then $b'$ must also be nilpotent, and we may assume that $Z(G) \leq N$. Applying Proposition \ref{kp}, we may take $H=\tilde{L}$ and $B_H=\tilde{B}$. Note that $[\tilde{L}:O_{\ell'}(Z(\tilde{L}))] \leq |L| = [G:N]|D \cap N| < [G:O_{\ell'}(Z(G))]$.
\end{proof}

\begin{lemma}
\label{solvablequotient}
Let $G$ be a finite group and let $N \lhd G$ with $G/N$ $\ell$-solvable. Let $B$ be a quasiprimitive block of $G$ with abelian defect group $D$. Then $DN/N \in \Syl_\ell(G/N)$.
\end{lemma}

\begin{proof}
There is a series $N=G_0 \lhd G_1 \lhd \cdots \lhd G_n=G$ with each $G_i$ normal in $G$ and each quotient either an $\ell$-group or an $\ell'$-group. Let $B_i$ be the unique block of $\cO G_i$ covered by $B$. If $G_{i+1}/G_i$ is an $\ell'$-group, then $B_{i+1}$ and $B_i$ share a defect group. Suppose that $G_{i+1}/G_i$ is an $\ell$-group. Then $B_{i+1}$ is the unique block of $\cO G_{i+1}$ covering $B_i$, and so by~\cite[Theorem 15.1]{alp} a defect group $D_{i+1}$ of $B_{i+1}$ satisfies $D_{i+1}G_i/G_i=G_{i+1}/G_i$. The result follows.

\end{proof}

\begin{proposition}[\cite{wa94}]
\label{watanabe}
Let $B$ be an $\ell$-block of $\cO G$ for a finite group $G$ and let $Z \leq O_\ell(Z(G))$. Let $\bar B$ be the unique block of $\cO (G/Z)$ corresponding to $B$. Then $B$ is nilpotent if and only if $\bar B$ is nilpotent.
\end{proposition}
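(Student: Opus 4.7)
The plan is to compare the subpair structures of $B$ and $\bar B$ via a natural correspondence, and to use the standard characterisation that $B$ is nilpotent precisely when $N_G(Q,b_Q)/QC_G(Q)$ is an $\ell$-group for every $B$-subpair $(Q,b_Q)$. Since $Z \leq O_\ell(Z(G))$ is a central $\ell$-subgroup it lies in every defect group, so $Z \leq D$ and $\bar D := D/Z$ is a defect group of $\bar B$.

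First I would set up a bijection between $B$-subpairs $(Q, b_Q)$ with $Z \leq Q \leq D$ and $\bar B$-subpairs $(\bar Q, \bar b)$ with $\bar Q \leq \bar D$. Given $(Q, b_Q)$, put $\bar Q = Q/Z$; let $\tilde C = \{g \in G : [g, Q] \leq Z\}$, so that $\tilde C/Z = C_{\bar G}(\bar Q)$, and $C_G(Q) \lhd \tilde C$ has $\ell$-power index because $\tilde C/C_G(Q)$ embeds into $\Hom(Q/Z, Z)$ via $g \mapsto (q \mapsto [g,q])$. Then $b_Q$ extends uniquely to a covering block of $\cO \tilde C$ (blocks are preserved under passage to normal subgroups of $\ell$-power index), which descends via the central $\ell$-subgroup block correspondence to a block $\bar b$ of $\cO C_{\bar G}(\bar Q)$. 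Compatibility with the subpair containment order follows from naturality of the Brauer correspondence and the central $\ell$-subgroup correspondence.

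Second, I would compare the inertial-type quotients. The projection $G \twoheadrightarrow \bar G$ sends $N_G(Q, b_Q)$ onto $N_{\bar G}(\bar Q, \bar b)$ with kernel $Z$, inducing a surjection $N_G(Q,b_Q)/QC_G(Q) \twoheadrightarrow N_{\bar G}(\bar Q, \bar b)/\bar Q C_{\bar G}(\bar Q)$ whose kernel is a subquotient of the $\ell$-group $\tilde C/C_G(Q)$. Hence one quotient is an $\ell$-group if and only if the other is. To reduce the full nilpotency criterion to subpairs containing $Z$, given an arbitrary $B$-subpair $(Q, b_Q)$ I would pass to $R = QZ$ with the unique containing subpair $(R, b_R)$; since $Z$ is central, $C_G(R) = C_G(Q)$ and hence $RC_G(R) = QC_G(Q)$, while $N_G(Q, b_Q) \leq N_G(R, b_R)$ because $R$ is characteristically determined from $Q$. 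Thus the condition at $(R, b_R)$ forces it at $(Q, b_Q)$, and the three steps combine to yield the equivalence.

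The main obstacle I anticipate is the block-theoretic bookkeeping in the first step, in particular verifying that $b_Q$ is $\tilde C$-stable and that the descent to $\cO C_{\bar G}(\bar Q)$ assembles with $\bar Q$ into a genuine $\bar B$-subpair respecting the inclusion order. These points follow from the interplay of the Brauer correspondence with the central $\ell$-subgroup correspondence, but have to be threaded through carefully. Once the correspondence is in place, the inertial-quotient comparison and the reduction to subpairs containing $Z$ are routine.
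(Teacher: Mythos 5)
Your route is necessarily different from the paper's, because the paper does not prove this statement at all: its proof is the citation of Watanabe~\cite{wa94}, where the result is established over $k$, together with the remark that the $\cO$-version follows immediately (nilpotency is a condition on the local data $N_G(Q,b_Q)/QC_G(Q)$, which does not see the coefficient ring). Your proposal instead re-derives the theorem from the Brou\'e--Puig subpair characterisation; this buys a self-contained argument in which the $k$-versus-$\cO$ point becomes moot. The group-theoretic skeleton is correct: $\tilde C/C_G(Q)$ embeds in $\Hom(Q/Z,Z)$ and so is an $\ell$-group, the kernel of the induced map on inertial quotients is a subquotient of it, and the reduction to subpairs containing $Z$ works because $C_G(QZ)=C_G(Q)$ and $\Br_{QZ}=\Br_Q$, so $(QZ,b_Q)$ is again a $B$-subpair with $QZC_G(QZ)=QC_G(Q)$. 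But the step you set aside as bookkeeping --- that the constructed $(\bar Q,\bar b)$ is genuinely a $\bar B$-subpair, and that every $\bar B$-subpair arises this way (the first implication is what the direction ``$\bar B$ nilpotent $\Rightarrow$ $B$ nilpotent'' uses, the second is what the converse uses) --- is the real substance of the theorem in this local formulation: it is the compatibility of the Brauer construction with domination by a central $\ell$-subgroup, and a complete write-up must either prove it or cite the theory of dominated blocks (or simply cite~\cite{wa94}, as the paper does). Two smaller corrections: $\tilde C$-stability of $b_Q$ is not needed, since a block of a normal subgroup of $\ell$-power index is covered by a unique block whether or not it is stable; and the claim that the projection sends $N_G(Q,b_Q)$ onto $N_{\bar G}(\bar Q,\bar b)$ is too strong, as an element of $C_{\bar G}(\bar Q)$ may lift only to an element of $\tilde C$ that moves $b_Q$ to another block covered by $\tilde b$ --- what is true, and suffices, is surjectivity of the induced map onto $N_{\bar G}(\bar Q,\bar b)/\bar Q C_{\bar G}(\bar Q)$, obtained by a Frattini argument from the transitive action of $\tilde C$ on the blocks of $C_G(Q)$ covered by $\tilde b$.
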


\begin{proof} The result in~\cite{wa94} is stated over $k$, but it follows over $\cO$ immediately.
\end{proof}

Recall that a block $B$ of $\cO G$ is \emph{nilpotent covered} if there is a finite group $H$ with $G \lhd H$ and a nilpotent block of $\cO H$ covering $B$. Let $D$ be a defect group for $B$ and let $b$ be the Brauer correspondent of $B$ in $\cO N_G(D)$. Following~\cite{pu11} $B$ is \emph{inertial} if it is basic Morita equivalent to $b$, that is, if there is a Morita equivalence induced by a bimodule with endopermutation source.

\begin{proposition}[\cite{pu11},~\cite{zh16}]
\label{inertial}
Let $G$ and $N$ be finite groups and $N \lhd G$. Let $b$ be a block of $\cO N$ covered by a block $B$ of $\cO G$.

(i) If $B$ is inertial, then $b$ is inertial.

(ii) If $b$ is nilpotent covered, then $b$ is inertial.

(iii) If $\ell \not| [G:N]$ and $b$ is inertial, then $B$ is inertial.

(iv) If $b$ is nilpotent covered, then it has abelian inertial quotient.
\end{proposition}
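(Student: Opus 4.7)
The plan is to set up compatible defect groups, then use source-algebra restriction for (i), deduce (ii) from (i) by nilpotent-block input, and tackle (iii) via a coprime-index extension argument, which will be the main difficulty.

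First I would arrange that a defect group $D$ of $B$ contains $D \cap N$ as a defect group of $b$, and write $b_1$ and $B_1$ for the Brauer correspondents of $b$ and $B$ in $\cO N_N(D \cap N)$ and $\cO N_G(D)$ respectively.

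For part (i), the hypothesis supplies a $(B,B_1)$-bimodule with endopermutation source realizing the Morita equivalence. Translating to source algebras, this yields an interior $D$-algebra identification between a source algebra of $B$ and a twist of a source algebra of $B_1$ by an endopermutation $\cO D$-module. I would then restrict this data along the normal inclusion $N \lhd G$, using the source-algebra machinery for covered blocks developed in~\cite{pu11}, to produce the analogous endopermutation-source Morita equivalence between $b$ and $b_1$. Hence $b$ is inertial.

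For part (ii), Puig's theorem on nilpotent blocks states that a nilpotent block is Morita equivalent to $\cO D$ via a bimodule with endopermutation source, so every nilpotent block is trivially inertial. If $b$ is covered by a nilpotent block $C$ of $\cO H$ with $N \lhd H$, then $C$ is inertial, and applying part (i) to the inclusion $N \lhd H$ shows that $b$ is inertial.

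For part (iii), the coprime condition $\ell \nmid [G:N]$ forces a defect group $D$ of $B$ to lie in $N$ and to coincide with a defect group of $b$. Starting from the $(b,b_1)$-bimodule with endopermutation source provided by the inertiality of $b$, I would extend it to a $(B,B_1)$-bimodule with an endopermutation source by a Clifford-theoretic argument in the spirit of Marcus. The crucial observation is that $N_G(D)/N_N(D)$ embeds in $G/N$ and is therefore an $\ell'$-group, so the obstruction to extending the inducing bimodule across the extension $N \lhd G$ vanishes and one can arrange the extension to preserve the endopermutation source; this is the content of the argument carried out in~\cite{zh16}. The main obstacle is precisely this last step: the endopermutation source condition is not automatically preserved under a Clifford-type lifting, and the coprime-index hypothesis is exactly what is needed to trivialize the relevant cohomological obstruction so that the source algebra equivalence at the $N$-level genuinely lifts to a basic equivalence at the $G$-level.
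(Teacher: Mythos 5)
Your proposal is correct and is essentially the paper's own approach: the paper proves this proposition purely by citation --- (i) is \cite[3.13]{pu11}, (ii) is \cite[4.3]{pu11} and (iii) is the main theorem of \cite{zh16} --- and your sketches of (i) and (iii) ultimately rest on exactly those sources, while your one genuine addition, deducing (ii) from (i) using that a nilpotent block is basic Morita equivalent to its (nilpotent, normal-defect) Brauer correspondent, is a valid shortcut equivalent to citing \cite[4.3]{pu11}. Be aware only that your gloss on (iii) --- that the coprime-index hypothesis makes ``the obstruction vanish'' because $N_G(D)/N_N(D)$ is an $\ell'$-group --- is looser than what \cite{zh16} actually proves (such cohomological obstructions over $k^\times$ need not vanish for $\ell'$-groups), but since you explicitly defer to that paper for this step, correctness is not affected.
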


\begin{proof}
(i) is~\cite[Theorem 3.13]{pu11}, (ii) and (iv) are~\cite[Corollary 4.3]{pu11}. (iii) is the main theorem of~\cite{zh16}.
\end{proof}

We will make frequent use of the classification of Morita equivalence classes of blocks with Klein four defect groups throughout this paper without further reference:

\begin{proposition}[\cite{er82},~\cite{li94},~\cite{CEKL}]
Let $B$ be a block of $\cO G$ for a finite group $G$. If $B$ has Klein four defect group $D$, then it is source algebra equivalent to the principal block of one of $\cO D$, $\cO A_4$ and $\cO A_5$.
\end{proposition}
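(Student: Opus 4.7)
The plan is to combine three classical ingredients already present in the literature cited in the statement: Erdmann's classification of tame blocks, Linckelmann's analysis of their source algebras over $k$, and the lifting result of Craven--Eaton--Kessar--Linckelmann (\cite{CEKL}) from $k$ to $\cO$.

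First I would determine the possible inertial quotients. Since $\Aut(D) \cong GL_2(2) \cong S_3$ and $E = N_G(D,b_D)/DC_G(D)$ must be an $\ell'$-group with $\ell = 2$, we have $E \in \{1, C_3\}$. In the case $E = 1$, $B$ is nilpotent because $D$ is abelian, and by Puig's theorem on nilpotent blocks $B$ is source algebra equivalent to $\cO D$, which is the principal block of $\cO D$. This disposes of the first case.

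Second, in the case $E = C_3$ I would analyse the Brauer correspondent $b$ of $B$ in $\cO N_G(D,b_D)$. Since $b$ is source algebra equivalent (again by Puig) to the principal block of $\cO(D \rtimes C_3) = \cO A_4$, the invariants force $B$ to have one of two possible decomposition matrices, corresponding to the Morita classes represented by the principal blocks of $\cO A_4$ (three simple modules) and $\cO A_5$ (two simple modules). To identify $B$ with one of these concretely, I would use Erdmann's classification \cite{er82} of basic algebras of tame blocks over $k$, which yields exactly the three candidates $kD$, $kA_4$, $kA_5$ up to Morita equivalence over $k$. Linckelmann's paper \cite{li94} then refines this to the statement that every Klein four block is source algebra equivalent over $k$ to the principal block of one of these three groups; this uses a direct analysis of the source algebra structure and the known module categories of the three reference blocks.

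Third, I would invoke \cite{CEKL} to lift the source algebra equivalence from $k$ to $\cO$. The main obstacle, and essentially the reason this final step was not resolved in the earlier papers, is that a Morita or even a source algebra equivalence between blocks over $k$ does not in general lift to an equivalence of blocks over $\cO$: one must verify that the lifted bimodule remains indecomposable with the correct endopermutation source and still induces an equivalence after tensoring up. This is the content of \cite{CEKL}, which handles the Klein four case by explicit identification of the relevant bimodules and a direct verification that the equivalences of \cite{li94} lift.
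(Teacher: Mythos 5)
The paper offers no argument for this statement at all: it is quoted verbatim as the main theorem of \cite{CEKL} and used as a black box, so the only question is whether your sketch of how the cited literature proves it is accurate. It is not, in one essential point. In the nilpotent case ($E=1$) you claim that Puig's theorem gives a source algebra equivalence with $\cO D$. Puig's theorem only gives that the source algebra of a nilpotent block is isomorphic to $\End_\cO(V)\otimes_\cO \cO D$ for some endopermutation $\cO D$-lattice $V$ (the source of the unique simple module), hence a Morita equivalence with $\cO D$; to get a source algebra equivalence one must know that $V$ has rank one, and for the Klein four group the Dade group is nontrivial, so this is not automatic. Proving that the simple modules of an arbitrary Klein four block have trivial source is precisely the hard content of \cite{CEKL}, and its proof goes by reducing that statement to quasisimple groups and checking it using the classification of finite simple groups. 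So your first case silently assumes the very point that makes the theorem difficult.

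Relatedly, your division of labour between \cite{li94} and \cite{CEKL} is inverted. Linckelmann's paper does not prove unconditionally, over $k$, that every Klein four block is source algebra equivalent to one of the three reference blocks; it works over $\cO$ and determines the source algebra in terms of the (then unknown) endopermutation source of a simple module, so the result there is conditional on that source being trivial. And \cite{CEKL} is not a ``lifting from $k$ to $\cO$'' paper: once the triviality of sources is established (via CFSG), Linckelmann's earlier analysis already yields the statement over $\cO$. Erdmann's tame-algebra classification over $k$ and the elementary determination of the inertial quotient ($E\in\{1,C_3\}$, $k(B)=4$, $l(B)\in\{1,3\}$) are fine as background, but a correct sketch must route the decisive step through the trivial-source theorem of \cite{CEKL}, not through Puig's nilpotent block theorem or a lifting argument.
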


We extract the results of~\cite{ekks14} necessary for this paper:

\begin{proposition}[\cite{ekks14}]
\label{classification16}
Let $B$ be a block of $\cO G$ for a quasisimple group $G$ with elementary abelian defect group $D$ of order dividing $16$. Then one or more of the following occurs:

(i) $G \cong SL_2(16)$, $J_1$ or ${}^2G_2(q)$, where $q=3^{2m+1}$ for some $m \in \NN$, and $B$ is the principal block;

(ii) $G \cong Co_3$ and $B$ is the unique non-principal $2$-block of defect $3$;

(ii) $G$ is of type $D_n(q)$ or $E_7(q)$ for some $q$ of odd prime power order, $O_2(G)=1$ and $B$ is Morita equivalent to a block $C$ of a $\cO L$ where $L =L_0 \times L_1 \leq G$ such that $L_0$ is abelian and the block of $\cO L_1$ covered by $C$ has Klein four defect groups;

(iii) $|O_2(G)|=4$ and $D/O_2(G)$ is a Klein four group;

(iv) $B$ is nilpotent covered.
\end{proposition}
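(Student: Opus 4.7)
My plan is to invoke the Classification of Finite Simple Groups and check each family of quasisimple groups in turn, showing that every block with elementary abelian defect group of order dividing $16$ falls into one of the listed cases or is nilpotent covered. Note that since $O_2(Z(G)) \leq D$, the constraint $|D| \mid 16$ already sharply restricts the $2$-part of the Schur multiplier of the underlying simple group, and by Proposition \ref{watanabe} the behaviour modulo $O_2(Z(G))$ is tightly controlled.

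For quasisimple groups in defining characteristic $2$, every block has full defect (by Humphreys' theorem on defect of blocks in the defining characteristic), so the Sylow $2$-subgroup is elementary abelian of order at most $16$. Walter's classification of simple groups with abelian Sylow $2$-subgroup then reduces to $L_2(2^n)$ for $n \leq 4$, yielding the principal blocks of $A_5$, $SL_2(8)$, and $SL_2(16)$; the last is recorded in case~(i), and Walter's list also supplies $J_1$ and ${}^2G_2(q)$ to case~(i). For alternating groups $A_n$ with $n \geq 5$, standard combinatorics of $2$-cores and hooks show that an elementary abelian $2$-block defect group of order at most $16$ must be Klein four, so beyond $A_4, A_5$ nothing new arises; their covering groups are disposed of via the center-reduction of Corollary \ref{kpcor} and land in case~(v). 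For sporadic groups I would inspect the known block data directly; the only non-principal block with elementary abelian defect of order dividing $16$ that is not already subsumed is the defect-$3$ block of $Co_3$, giving case~(ii).

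The main technical step is the analysis of quasisimple groups of Lie type in non-defining characteristic. Here the plan is to invoke the Jordan-decomposition and regular-embedding machinery of Brou\'e--Michel, Bonnaf\'e--Rouquier, and Cabanes--Enguehard as deployed in~\cite{ekks14}: any $2$-block of abelian defect in such a group is Morita equivalent to a unipotent block of a Levi subgroup $L$ associated to a semisimple $2'$-element. The requirement that the defect group be elementary abelian of order at most $16$ forces the unipotent part to live on a very small rank Levi, so one gets $L = L_0 \times L_1$ with $L_0$ abelian and the block of $\cO L_1$ having Klein four defect. A root-system bookkeeping then shows that only types $D_n(q)$ and $E_7(q)$ in odd characteristic produce a non-nilpotent-covered, non-principal Morita class of this shape, yielding case~(iii); in all other Dynkin types the block is either principal with defect group absorbed into the earlier cases or falls into case~(v). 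Covering groups with $|O_2(G)|=4$ and Klein four quotient $D/O_2(G)$ account for case~(iv). The root-system analysis for the $D_n$ and $E_7$ families is where the main obstacle lies: one must pin down precisely which centralizers of semisimple elements produce the $L_0 \times L_1$ decomposition while simultaneously ruling out sporadic Morita classes from the exceptional types, and this is the substantive content extracted from the detailed case-by-case work of~\cite{ekks14}.
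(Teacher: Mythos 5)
The paper does not reprove this statement at all: Proposition \ref{classification16} is an extraction from the literature, and its entire proof is the citation ``Proposition 5.3 and Theorem 6.1 of~\cite{ekks14}''. Your outline is, in broad strokes, a summary of how that reference proceeds (CFSG, Walter's theorem in the abelian-Sylow situations, block combinatorics for alternating and sporadic groups, Brou\'e--Michel/Bonnaf\'e--Rouquier and Cabanes--Enguehard for Lie type in odd characteristic), and you end by explicitly deferring the decisive case analysis to~\cite{ekks14} --- so in substance your proposal and the paper rest on exactly the same source, and as such it is acceptable.

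Be aware, however, that taken as a standalone proof your sketch has gaps and imprecisions precisely at the points that carry the content. In defining characteristic $2$ it is not true that every block has full defect (the Steinberg block has defect zero); this is harmless when $D\neq 1$ but should be said. Walter's classification can only be invoked once you know the defect group is a Sylow $2$-subgroup, i.e.\ essentially for principal blocks, so ``Walter's list also supplies $J_1$ and ${}^2G_2(q)$'' needs that reduction first. For Lie type in odd characteristic, the Bonnaf\'e--Rouquier reduction is to a (quasi-)isolated block of a Levi subgroup, not directly to a unipotent block, and the hard part --- which quasi-isolated blocks of types $D_n$ and $E_7$ survive, how nontrivial centres of the quasisimple covers produce exactly the $|O_2(G)|=4$ alternative, and why all other types end up nilpotent covered or principal --- is precisely what you wave at as ``root-system bookkeeping'' and then attribute to~\cite{ekks14}. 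Since the paper itself offers no independent argument, citing those two results is the honest proof here; just do not present the sketch as if it closed those steps.
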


\begin{proof}
This follows from Proposition 5.3 and Theorem 6.1 of~\cite{ekks14}.
\end{proof}

One obstacle in classifying Morita equivalence classes over $\cO$ rather than $k$ is that the results of~\cite{kk96} only apply over $k$. However in our situation we are lucky to be able to apply some work of Watanabe on perfect isometries to obtain the same result over $\cO$ in certain crucial cases. For the benefit of the reader we state the relevant result of~\cite{wa00} here. First we need some more notation.

Let $B$ be a block of $\cO G$, where $G$ is a finite group. Write $\mathcal{L}_K(G,B)$ for the group of generalized characters of $B$ with respect to $K$. Let $\chi$ be a generalized character of $B$. Fix a maximal $B$-subpair $(D,b_D)$. Let $\lambda$ be a generalized character of a defect group $D$ of $B$ such that whenever $(x,b_x) \in (D,b_D)$ and $z \in G$ such that $(x,b_x)^z \in (D,b_D)$, we have $\lambda(x)=\lambda(x^z)$. Define $\lambda * \chi$ as in~\cite{bp80}, another generalized character of $B$. In the following, if $\lambda$ is a generalized character of a factor group of $D$, then we are implicitly considering its inflation to $D$.

\begin{proposition}[Lemma 3 of~\cite{wa00}]
\label{watanabe_isometry}
Let $B$ be a block of a finite group $G$ covering a $G$-stable block $b$ of $N \lhd G$. Suppose that $B$ has an abelian defect group $D$ and there is $Q \leq D$ such that $D = Q \times (D \cap N)$ and $G=N \rtimes Q$. Let $b_D$ be a block of $C_G(D)$ with Brauer correspondent $B$, and write $B'=(b_D)^{C_G(Q)}$. If there is a perfect isometry $I:\mathcal{L}_K(C_G(Q),B') \rightarrow \mathcal{L}_K(G,B)$ satisfying $I(\lambda * \zeta)=\lambda * I(\zeta)$ for all $\lambda \in \Irr(Q)$ and $\zeta \in \Irr(B')$, then $B \cong \cO Q \otimes_\cO b$ as $\cO$-algebras.
\end{proposition}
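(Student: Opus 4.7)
The plan is to exploit the semidirect product structure $G = N \rtimes Q$ to decompose the local Brauer data on the $C_G(Q)$-side, and then use the perfect isometry together with its $*$-compatibility to pin down an actual $\cO$-algebra isomorphism.

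First I would unpack the local structure. Since $Q$ acts on $N$ and stabilizes $b$, we have $C_G(Q) = C_N(Q) \times Q$, and because $Q$ and $D \cap N$ both lie in the abelian group $D$, one also gets $C_G(D) = C_{C_N(Q)}(D \cap N) \times Q$. Consequently the block $B' = (b_D)^{C_G(Q)}$ decomposes as $B' \cong c \otimes_\cO \cO Q$ for a unique block $c$ of $\cO C_N(Q)$ ($\cO Q$ has a single block as $Q$ is an $\ell$-group), and via the subpair inclusion $(D \cap N, c_D) \leq (D, b_D)$ one identifies $c$ as the Brauer correspondent in $C_N(Q)$ of $b$, so in particular $c$ has defect group $D \cap N$. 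On the target side, $\cO Q \otimes_\cO b$ is naturally a block of $\cO(Q \times N)$ whose analogous Brauer correspondent in $\cO(Q \times C_N(D \cap N))$ matches $B'$ under these identifications.

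Next I would read off from $I$ an $\cO$-algebra isomorphism $Z(B') \to Z(B)$ via the standard Brou\'e formalism for perfect isometries, and interpret the hypothesis $I(\lambda * \zeta) = \lambda * I(\zeta)$ as $\Irr(Q)$-equivariance for the $*$-action on both centres. Since $Q \leq Z(C_G(Q))$, the $*$-action on $Z(B')$ coincides with the natural $\cO Q$-module structure coming from $B' \cong c \otimes \cO Q$, so the compatibility forces $Z(B)$ to carry the same $\cO Q$-graded algebra structure as $Z(\cO Q \otimes_\cO b)$.

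The final step is to upgrade this to an $\cO$-algebra isomorphism $B \cong \cO Q \otimes_\cO b$, and this is the main obstacle, since a perfect isometry is in general much weaker than an algebra isomorphism. The idea I would follow from Watanabe is that the split defect group $D = Q \times (D \cap N)$ together with the $*$-compatibility leaves essentially no flexibility: the $G/N$-grading of $B$ (with $G/N \cong Q$) must be \emph{trivial}, in the sense that the degree-$q$ component is identified with $q \otimes b$, which forces the claimed isomorphism. The subtle point is that the conclusion is needed over $\cO$ rather than only over $k$, but the strength of the perfect-isometry hypothesis (as opposed to a mere isotypy) is precisely what allows this lift, via the usual idempotent-lifting arguments available in blocks with abelian defect.
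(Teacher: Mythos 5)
The paper itself offers no proof of this proposition: it is quoted as Lemma~3 of~\cite{wa00} for the reader's convenience, so your attempt has to be measured against what a complete proof actually requires. Your preliminary reductions are correct and useful: since $Q\leq D$ is abelian and $G=N\rtimes Q$, one does get $C_G(Q)=C_N(Q)\times Q$, hence $B'\cong c\otimes_\cO\cO Q$ for a block $c$ of $\cO C_N(Q)$, and a perfect isometry induces an $\cO$-algebra isomorphism of centres which, by the hypothesis $I(\lambda*\zeta)=\lambda*I(\zeta)$, is equivariant for the $*$-action of $\Irr(Q)$. One should also record that, because $G/N\cong Q$ is an $\ell$-group and $b$ is $G$-stable, $B=\cO Gb$ is the unique block covering $b$ and is a crossed product of $b$ with $Q$; the assertion to be proved is precisely that this crossed product splits as $\cO Q\otimes_\cO b$ over $\cO$.

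That splitting is the entire content of the lemma, and it is exactly the step you do not prove. Saying that the split defect group and the equivariance ``leave essentially no flexibility'' and invoking ``the usual idempotent-lifting arguments'' does not carry the weight: the grading group $Q$ is an $\ell$-group, so, unlike the $\ell'$-crossed products analysed in Section~\ref{crossedproducts} of the paper, the obstruction groups such as $H^2(Q,1+J(Z(b)))$ and $H^2(Q,U(Z(b)))$ do not vanish for formal reasons, and this is precisely where the perfect isometry has to be used. Concretely, a proof must explain how the character-theoretic hypothesis yields units of $B$ in the nontrivial graded components that centralise $b$ and multiply according to $Q$ (equivalently, trivialises the relevant $2$-cocycle over $\cO$); Watanabe does this by exploiting the transported structure of $B'\cong c\otimes\cO Q$ (e.g.\ the free $*$-action of $\Irr(Q)$) together with the centre isomorphism, in the spirit of the splitting theorem of~\cite{kk96} but with a genuine lift from $k$ to $\cO$. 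Moreover, an $\Irr(Q)$-equivariant isomorphism of centres — which is all the perfect isometry directly hands you — is far weaker than an isomorphism of blocks and cannot by itself force $B\cong\cO Q\otimes_\cO b$. As it stands, your argument reduces the statement to its hard point and then asserts the conclusion.
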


\begin{proposition}
\label{KK} Let $G$ be a finite group  and let  $B$ be a block of $\cO G$ with elementary abelian defect group $D$ of order $16$ and cyclic inertial quotient. Suppose $N \unlhd G$ with $G=ND$. If $B$ covers a non-nilpotent $G$-stable block $b$ of $\cO N$, then there is an elementary abelian $2$-group $Q \leq D$  with $G=N \rtimes Q$ such that $B$ is Morita equivalent to a block $C$  of  $\cO (N \times Q) $  with  defect group $(D \cap N) \times Q \cong D$.
\end{proposition}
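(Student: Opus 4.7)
The plan is to apply Watanabe's algebra-isomorphism criterion (Proposition \ref{watanabe_isometry}).

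\emph{Setting up the complement.} Since $D$ is elementary abelian, I may choose $Q\leq D$ with $D=Q\times(D\cap N)$. Then $Q\cap N\leq Q\cap(D\cap N)=1$, while $NQ\supseteq N\cdot Q(D\cap N)=ND=G$, giving $G=N\rtimes Q$, and $G/N\cong Q$ is an elementary abelian $2$-group. Fix a maximal $B$-subpair $(D,b_D)$ and let $B'=(b_D)^{C_G(Q)}$ be its Brauer correspondent at $Q$. Because $Q\leq Z(C_G(Q))$ and $C_G(Q)=C_N(Q)\times Q$ with $\cO Q$ indecomposable, $B'\cong b'\otimes_{\cO}\cO Q$, where $b'=(b_{D\cap N})^{C_N(Q)}$ is a block of $\cO C_N(Q)$ with defect group $D\cap N$; in particular $\Irr(B')$ is naturally identified with $\Irr(b')\times\Irr(Q)$.

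\emph{Producing the perfect isometry.} The core step is to construct a perfect isometry $I:\cL_K(C_G(Q),B')\to\cL_K(G,B)$ satisfying $I(\lambda*\zeta)=\lambda*I(\zeta)$ for all $\lambda\in\Irr(Q)$ and $\zeta\in\Irr(B')$. The hypothesis that $B$ has cyclic inertial quotient is essential here: the inertial quotient of $b$ is a subquotient of that of $B$, hence cyclic with trivial Schur multiplier. Combined with $G/N\cong Q$ being elementary abelian and $b$ being $G$-stable, the Clifford-theoretic obstruction class controlling the crossed-product structure of $B$ over $b$ (an element of $H^2(Q,(b\otimes k)^\times)$ modulo inner automorphisms) is cohomologically trivial. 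This is exactly the situation analysed by K\"ulshammer in \cite{ku95}; together with the Morita-equivalence framework over $k$ of \cite{kk96} one obtains an isomorphism $B\otimes_{\cO}k\cong(b\otimes_{\cO}k)\otimes_k kQ$ of $k$-algebras, and consequently a character bijection $\Irr(B)\leftrightarrow\Irr(b)\times\Irr(Q)$. Tracking the bijection through the Brauer elements $(x,b_x)\in(D,b_D)$ and lifting to $K$-valued generalized characters yields the required perfect isometry $I$, with the $\lambda*$-equivariance being visible from the tensor-product decomposition over $k$.

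\emph{Conclusion.} Proposition \ref{watanabe_isometry} then upgrades this to an isomorphism $B\cong\cO Q\otimes_{\cO}b$ of $\cO$-algebras. Setting $C:=b\otimes_{\cO}\cO Q$, this is a block of $\cO(N\times Q)\cong\cO N\otimes_{\cO}\cO Q$ with defect group $(D\cap N)\times Q\cong D$, and $B$ is Morita equivalent (indeed, isomorphic) to $C$, as required. The main obstacle is the middle step: converting the cyclic-inertial-quotient hypothesis into the requisite cohomological vanishing over $k$, and then verifying that the resulting character bijection extends to a Broué-perfect isometry equivariant under the $\Irr(Q)$-action. Everything else is bookkeeping about Brauer subpairs and complements inside the abelian $2$-group $D$.
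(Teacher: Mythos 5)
The middle step is a genuine gap, and it is exactly the one your last sentence flags. An isomorphism of $k$-algebras $B\otimes_\cO k\cong (b\otimes_\cO k)\otimes_k kQ$ (even granting that \cite{ku95} and \cite{kk96} give it — note that the crossed-product obstruction analysis recalled in Section \ref{crossedproducts} is for extensions by $\ell'$-groups, whereas your $Q$ is a $2$-group with $\ell=2$, so it is \cite{kk96} alone doing the work there) carries no control over the ordinary characters: $\Irr(B)$, the Brauer elements $(x,b_x)$ and the notion of perfect isometry live over $\cO$ and $K$, and a bijection of simple $k$-modules cannot simply be ``lifted to $K$-valued generalized characters'' to produce a Brou\'e-perfect, $\Irr(Q)$-equivariant isometry. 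This $k$-versus-$\cO$ passage is precisely the obstruction the paper is organized around: it states that the splitting theorem of \cite{kk96} only applies over $k$, and the whole point of Proposition \ref{KK} is to get the splitting over $\cO$. If your middle step worked, the cyclic inertial quotient hypothesis would be essentially irrelevant; in the paper it enters in a completely different way, namely through the main theorem of \cite{wa05} (together with $l(B)=|E|$ from \cite{sa17}): a block with abelian defect group and cyclic inertial quotient is perfectly isometric to its Brauer correspondent in $N_G(D,b_D)$ by an isometry commuting with $\lambda *(-)$ for $\lambda\in\Irr(D_1)$, where $D_1=C_D(N_G(D,b_D))$. This is applied twice, to $B$ and to $B'=(b_D)^{C_G(Q)}$ (which has the same inertial quotient because $N_G(D,b_D)\le C_G(Q)$), and the composite $I\circ J^{-1}$ is what gets fed into Proposition \ref{watanabe_isometry}.

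A secondary but real issue is your choice of $Q$: you take an arbitrary complement of $D\cap N$ in $D$. The paper must choose $Q$ inside $D_1=C_D(N_G(D,b_D))$, using the decomposition $D=D_1\times D_2$ with $D_2=[N_G(D,b_D),D]\le N$ and the fact that $E$ is cyclic and $D$ elementary abelian, so that a complement of $D\cap N$ can indeed be found inside $D_1$. This is what guarantees $N_G(D,b_D)\le C_G(D_1)\le C_G(Q)$, that $B'$ again has inertial quotient $E$ (so Watanabe's theorem applies to it as well), and that the $\Irr(Q)$-equivariance needed in Proposition \ref{watanabe_isometry} follows from the $\Irr(D_1)$-equivariance. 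For an arbitrary complement none of this is available, and your identification $C_G(Q)=C_N(Q)\times Q$, while true, does not by itself relate $B'$ back to $b$ in the required way.
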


\begin{proof} Let $b_D$ be a block of $C_G(D)$ with Brauer correspondent $B$, and write $E=N_G(D,b_D)/C_G(D)$ as described in the introduction. We may suppose $G \neq N$, so $|E| \leq 7$. By~\cite[Theorem 15]{sa17} we have $l(B)=|E|$.

Following~\cite{wa05}, we may write $D=D_1 \times D_2$ where $D_1=C_D(N_G(D,b_D))$ and $D_2 = [N_G(D,b_D),D]$. We have $D \rtimes E=D_1 \times (D_2 \rtimes E)$. Since $D_2 \leq N$, $E$ is cyclic and $D$ is elementary abelian, we may choose $Q$ to be a direct factor of $D_1$.

 By the main theorem of~\cite{wa05} there is a perfect isometry $$I:\mathcal{L}_K(N_G(D,b_D),b^{N_G(D,b_D)}) \rightarrow \mathcal{L}_K(G,B)$$ such that $I(\lambda * \zeta)=\lambda * I(\zeta)$ for all $\lambda \in \Irr(D_1)$ and $\zeta \in \mathcal{L}_K(N_G(D,b_D),b^{N_G(D,b_D)})$. We have $N_G(D,b_D) \leq C_G(D_1) \leq C_G(Q)$. Let $B'=(b_D)^{C_G(Q)}$. Now $B'$ also has inertial quotient $E$ and we may apply the same argument to obtain a perfect isometry
 $$J:\mathcal{L}_K(N_G(D,b_D),b^{N_G(D,b_D)}) \rightarrow \mathcal{L}_K(C_G(Q),B')$$ such that $J(\lambda * \zeta)=\lambda * J(\zeta)$ for all $\lambda \in \Irr(D_1)$ and $\zeta \in \mathcal{L}_K(N_G(D,b_D),b^{N_G(D,b_D)})$. We may then apply Proposition \ref{watanabe_isometry} to $I \circ J^{-1}$ and the result follows.
\end{proof}

In the above note that if $b$ is Morita equivalent to a block $c$ of $\cO M$ for some finite group $M$, then $C$ is Morita equivalent to the block $c \otimes \cO Q$ of $\cO (M \times Q)$.

\begin{lemma}
\label{typeC3orC3xC3}
Let $G$ be a finite group and $N \lhd G$ with $G/N$ of odd order (and solvable). Let $B$ be a block of $\cO G$ covering a $G$-stable block $b$ of $\cO N$ with defect group $D \cong (C_2)^4$. Suppose that $B$ covers no nilpotent block of any normal subgroup $M \lhd G$ with $N \leq M$. If $b$ is of type $C_3 \times C_3$ or $(C_3)_1$, then $B$ is also of one of these two types.
\end{lemma}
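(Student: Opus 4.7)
The plan is to compare the inertial quotient $E$ of $B$ with the inertial quotient $E_b$ of $b$ via standard Brauer-pair theory, and then to appeal to the list of possible inertial quotients for blocks with defect group $(C_2)^4$ recalled in the introduction.

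First I would fix a maximal $B$-subpair $(D,b_D)$ and a block $e_D$ of $DC_N(D)$ covered by $b_D$, chosen so that $(D,e_D)$ is a maximal $b$-subpair. Since $b$ is $G$-stable and $B$ covers $b$, standard arguments show that $E_b=N_N(D,e_D)/DC_N(D)$ embeds as a normal subgroup of $E=N_G(D,b_D)/DC_G(D)$ (after replacing $b_D,e_D$ by conjugates if necessary), with $E/E_b$ embedding into $G/N$. In particular $|E/E_b|$ is odd.

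Next I would proceed by case analysis on $E_b$, using that $E$ must be one of $1,\ (C_3)_1,\ (C_3)_2,\ C_5,\ C_7,\ C_3\times C_3,\ C_7\rtimes C_3,\ C_{15}$. If $E_b\cong C_3\times C_3$, then $9$ divides $|E|$, and the only option in the list is $E\cong C_3\times C_3$. If $E_b$ is of type $(C_3)_1$, then $|E|\in\{3,9,15,21\}$, so $E\in\{C_3,\ C_3\times C_3,\ C_{15},\ C_7\rtimes C_3\}$. The group $C_7\rtimes C_3$ has no normal subgroup of order $3$, contradicting $E_b\lhd E$, and so is excluded. The group $C_{15}$ acts on $D$ via a Singer cycle of $\mathbb{F}_{16}$, whose unique order-$3$ subgroup is of type $(C_3)_2$; since the image of $E_b$ in $E$ must be that subgroup, this contradicts the assumption that $E_b$ is of type $(C_3)_1$. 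In the remaining cases $E\cong C_3$ (where necessarily $E=E_b$ is of type $(C_3)_1$) and $E\cong C_3\times C_3$, the desired conclusion holds.

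The main mild obstacle will be the first paragraph, i.e., the normal embedding $E_b\lhd E$ with $E/E_b$ of odd order. In our setting (where $b$ is $G$-stable and $|G/N|$ is odd, hence coprime to $\ell=2$) this is standard and can be verified either directly by analysing the $N_G(D,b_D)$-action on the set of blocks of $DC_N(D)$ covered by $b_D$, or equivalently by invoking the fact that the fusion system of $b$ on $D$ is a normal subsystem of that of $B$. The hypothesis that $B$ covers no nilpotent block of a proper intermediate normal subgroup is not strictly needed for the arithmetic above; it situates the lemma within the set-up used during the main proof of Theorem \ref{maintheorem}.
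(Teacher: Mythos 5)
The crux of your argument --- the claim in your first paragraph that $E_b$ embeds as a normal subgroup of $E$ with $E/E_b$ embedding into $G/N$ --- is false in this generality, and no choice of conjugate subpairs can repair it: the inertial quotient can strictly \emph{shrink} on passing from $b$ to a covering block $B$, even when $b$ is $G$-stable, $[G:N]$ is odd and $B$ and $b$ share the defect group $D$. The paper's own Proposition \ref{applying_cross_products}(iii) supplies a counterexample. Take $G=A_4\times \PGL_3(7)$ and $N=A_4\times \PSL_3(7)$, with $b$ the tensor product of $B_0(\cO A_4)$ and the $G$-stable block of $\cO \PSL_3(7)$ Morita equivalent to $\cO A_4$, and $B$ the block of $\cO G$ covering it (namely $B_0(\cO A_4)$ tensored with the \emph{nilpotent} block of $\PGL_3(7)$ covering the $\PSL_3(7)$-block). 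All hypotheses of Lemma \ref{typeC3orC3xC3} hold, $b$ has inertial quotient $C_3\times C_3$, but $B$ has inertial quotient $C_3$ of type $(C_3)_1$; so $|E|=3<9=|E_b|$ and your deduction ``if $E_b\cong C_3\times C_3$ then $9$ divides $|E|$'' fails (the lemma's conclusion survives only because type $(C_3)_1$ is also allowed). Your fall-back justification fails for the same reason: the fusion system of $b$ on $D$ need not be a subsystem, normal or otherwise, of that of $B$ --- the nilpotent block of $\PGL_3(7)$ covering a non-nilpotent block of $\PSL_3(7)$ is the standard counterexample. Your closing remark that the hypothesis on nilpotent blocks ``is not strictly needed'' is also wrong: applied with $M=G$ it is exactly what forbids $B$ from being nilpotent, and without it the statement is false (take $(C_2)^2\times \PGL_3(7)$ over $(C_2)^2\times \PSL_3(7)$, where a nilpotent $B$ covers a $G$-stable $b$ of type $(C_3)_1$).

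For comparison, the paper reduces to $[G:N]=w$ an odd prime and then splits according to whether $C_G(D)=C_N(D)$. Only in that case does one get the containment you assert (inertial quotient of $B$ contains that of $b$ with index dividing $w$), and there your arithmetic in the second paragraph is essentially the paper's argument. When $C_G(D)\neq C_N(D)$ the comparison runs in the opposite direction: taking a $B$-subpair $(D,B_D)$ with $B_D$ covering the $b$-subpair block $b_D$, the paper shows that either $N_G(D,B_D)/C_G(D)\cong N_N(D,b_D)/C_N(D)$, or $C_G(D)\leq N_G(D,b_D)$ and then $N_G(D,B_D)\leq N_G(D,b_D)=C_G(D)N_N(D,b_D)$, so that $[N_G(D,B_D):C_G(D)]$ divides $[N_N(D,b_D):C_N(D)]$ and the action of $E$ on $D$ is realised inside that of $E_b$; the non-nilpotency hypothesis then excludes the trivial quotient. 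Any correct proof has to handle this possibility of $E$ being smaller than $E_b$, which your proposal does not.
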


\begin{proof}
It suffices to consider the case that $[G:N]$ is an odd prime, say $w$. Note that $B$ and $b$ share the defect group $D$.

Suppose $C_G(D)=C_N(D)$. Then the inertial quotient of $B$ contains that of $b$ with index dividing $w$. Since $C_3 \times C_3$ is maximal amongst subgroups of odd order of $GL_4(2)$ and is the only subgroup containing $C_3$ as a normal subgroup the result follows in this case.

Suppose $C_G(D) \neq C_N(D)$. Let $(D,b_D)$ be a $b$-subpair and let $(D,B_D)$ be a $B$-subpair with $B_D$ covering $b_D$. If $C_G(D) \not\leq N_G(D,b_D)$, then $B_D$ covers $w$ conjugates of $b_D$ and $B_D$ is the unique block of $C_G(D)$ covering $b_D$, so $N_G(D,B_D) = C_G(D)N_G(D,b_D)$. Hence $N_G(D,B_D)/C_G(D) \cong N_N(D,b_D)/C_N(D)$ and we are done in this case. If $C_G(D) \leq N_G(D,b_D)$, then $N_G(D,B_D) \leq N_G(D,b_D)$ as $b_D$ is the unique block of $C_N(D)$ covered by $B_D$. Now $[N_G(D,B_D):C_G(D)]$ divides $[N_N(D,b_D):C_N(D)]$ and we are done.
\end{proof}



\section{Crossed products and Picard groups}
\label{crossedproducts}

An essential part of a reduction of Donovan's conjecture to quasisimple groups is K\"ulshammer's analysis in~\cite{ku95} of the situation of a normal subgroup containing the defect groups of a block, which involves the study of crossed products of a basic algebra with an $\ell'$-group. In the general setting he finds finiteness results for the possible crossed products, but in our situation we are able to precisely describe the possibilities using knowledge of the Picard groups of certain basic algebras.

Background on crossed products may be found in~\cite{ku95}, but we summarize what we need here. Let $X$ be a finite group and $R$ an $\cO$-algebra. A crossed product of $R$ with $X$ is an $X$-graded algebra $\L$ with identity component $\L_1 = R$ such that each graded component $\L_x$, where $x \in X$, contains a unit $u_x$. Given a choice of unit $u_x$ for each $x$, we have maps $\alpha: X \rightarrow \Aut (R)$ given by conjugation by $u_x$ and $\mu:X \times X \rightarrow U(R)$ given by $\alpha_x \circ \alpha_y = \iota_{\mu(x,y)} \circ \alpha_{xy}$, where $U(R)$ is the group of units of $R$ and $\iota_{\mu(x,y)}$ is conjugation by $\mu(x,y)$. The pair $(\alpha,\mu)$ is called a parameter set of $X$ in $R$. In~\cite{ku95} an isomorphism of crossed products respecting the grading is called a weak equivalence. By the discussion following Proposition 2 of~\cite{ku95} weak isomorphism classes of crossed products of $R$ with $X$ are in bijection with pairs consisting of an $\Out(R)$-conjugacy class of homomorphisms $X \rightarrow \Out(R)$ for which the induced element in $H^3(X,U(Z(R)))$ vanishes and an element of $H^2(X,U(Z(R)))$.

Note that $\alpha: X \rightarrow \Aut (R)$ restricts to a map $X \rightarrow \Aut (Z(R))$, which makes $Z(R)$ an $X$-algebra. The $k$-algebras $Z(R)/J(Z(R))$ and $U(Z(R)/J(Z(R)))$ also become $X$-algebras.

Now suppose that $X = \langle x \rangle$ is a cyclic $\ell'$-group. Following the strategy in~\cite[Section 3]{ku95}, $U(Z(R)) \cong U(Z(R)/J(Z(R))) \times (1+J(Z(R))$ and $$H^2(X,U(Z(R))) \cong H^2(X,U(Z(R)/J(Z(R)))) \times H^2(X,1+J(Z(R))).$$ We have $H^2(X,1+J(Z(R)))=0$ since $X$ is an $\ell'$-group. Now $Z(R)/J(Z(R))$ is a commutative semisimple $k$-algebra, which we denote $A$, and note as above that it is an $X$-algebra. Write $A=A_1 \times \cdots \times A_r$, where each $A_i$ is a product of simple algebras constituting an $X$-orbit. We have $H^2(X,U(A)) \cong H^2(X,U(A_1)) \times \cdots \times H^2(X,U(A_r))$. We claim each $H^2(X,U(A_i))$ vanishes. As a $kX$-module $U(A_i)$ is induced from the trivial module of $kY$ for some $Y \leq X$, and so by Shapiro's Lemma $H^2(X,U(A_i)) \cong H^2(Y,k^\times)$ (see~\cite[2.8.4]{ben1}), which vanishes since $X$ is cyclic. We have shown that $H^2(X,U(Z(R)))=0$ for each $i$.


Now suppose that we have a finite group $G$ and $N \lhd G$ with $G/N$ an $\ell'$-group. Suppose that $B$ is a block of $\cO G$ covering a $G$-stable block $b$ of $\cO N$. Define $X=G/G[b]$. Let $C$ be a block of $\cO G[b]$ covering $b$ (and covered by $B$), which by Proposition \ref{innerautos} is Morita equivalent to $b$. Let $f$ be an idempotent of $C$ such that $fCf$ is a basic algebra. Following~\cite{ku95}, which is performed over $\cO$ in~\cite{ei18}, we may consider $fBf$ as a crossed product of $fCf$ with $X$, and $fBf$ is Morita equivalent to $B$. 

The Picard group $\Pic(R)$ of $R$ consists of isomorphism classes $R$-$R$-bimodules which induce Morita self-equivalences of $b$. For $b$-$b$-bimodules $M$ and $N$, the group multiplication is given by $M \otimes_b N$. Let $\varphi \in \Aut(R)$. Define the $R$-$R$-bimodule ${}_\varphi R$ by letting ${}_\varphi R=R$ as sets and defining $a_1 \cdot m \cdot a_2 = \varphi(a_1)ma_2$ for $a_1,a_2,m \in b$. By~\cite[55.11]{cr2} inner automorphisms give isomorphic bimodules and $\varphi \mapsto {}_\varphi R$ gives rise to an injection $\Out(R) \rightarrow \Pic(R)$. If $R$ is a basic algebra, then this is an isomorphism. We direct the reader to~\cite{bkl17} for a thorough investigation of Picard groups of blocks with respect to discrete valuation rings.

For most of the blocks which appear as candidates for $b$ in this paper the Picard group is known by~\cite{el18}. We gather this information here:

\begin{proposition}[\cite{el18}]
\label{Picard}
Let $Q$ be a finite abelian $2$-group.

(i) $\Pic(\cO(A_4 \times Q)) \cong S_3 \times (Q \rtimes \Aut(Q))$.

(ii) $\Pic(B_0(\cO(A_5 \times Q))) \cong C_2 \times (Q \rtimes \Aut(Q))$.

(iii) $\Pic(\cO(A_4 \times A_4)) \cong S_3 \wr C_2$.

(iv) $\Pic(B_0(\cO(A_5 \times A_4))) \cong S_3 \times C_2$.
\end{proposition}

Applying all of the above, we have the following:

\begin{proposition}
\label{applying_cross_products}
Let $G$ be a finite group and $N \lhd G$ with $G/N$ cyclic of odd prime order. Let $b$ be a $G$-stable block of $\cO N$ with defect group $D \cong (C_2)^4$.

(i) If $b$ is Morita equivalent to $\cO (A_4 \times C_2 \times C_2)$, then $B$ is Morita equivalent to $b$, $\cO D$, $\cO(A_4 \times A_4)$ or a non-principal block of $\cO(C_2)^4 \rtimes 3_+^{1+2}$, where the centre of $3_+^{1+2}$ acts trivially.

(ii) If $b$ is Morita equivalent to the principal block of $\cO(A_5 \times C_2 \times C_2)$, then $B$ is Morita equivalent to $b$ or the principal block of $\cO(A_5 \times A_4)$.

(iii) If $b$ is Morita equivalent to $\cO(A_4 \times A_4)$, then $B$ is Morita equivalent to $b$, $\cO(A_4 \times C_2 \times C_2)$ or $\cO((C_2)^4 \rtimes C_3))$ where the $C_3$ acts with only one fixed point.

(iv) If $b$ is Morita equivalent to the principal block of $\cO(A_4 \times A_5)$, then $B$ is Morita equivalent to $b$ or the principal block of $\cO(C_2 \times C_2 \times A_5)$.

(v)  If $b$ is Morita equivalent to the principal block of $\cO(A_5 \times A_5)$, then $B$ is Morita equivalent to $b$.

\end{proposition}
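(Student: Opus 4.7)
The plan is to apply the crossed-product framework developed in the preceding discussion. Choose $f\in b$ so that $R=fbf$ is a basic algebra for $b$, selected using the final statement of Proposition~\ref{Picard} so that the conjugation action of $G$ on $b$ restricts to an action of $X:=G/N$ on $R$ by algebra automorphisms fixing $f$; then $fBf$ is a crossed product $R\#X$. Since $H^2(X,U(Z(R)))$ and $H^3(X,U(Z(R)))$ both vanish by the calculation carried out earlier in this section, the weak isomorphism class of $fBf$ is determined by the $\Out(R)$-conjugacy class of the image of $X$ in $\Out(R)$; by Linckelmann's observation recorded above, this image lies inside $\cT(b)\hookrightarrow \Pic(b)$. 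In particular, if $p:=[G:N]$ does not divide $|\cT(b)|$ the image is trivial, $fBf$ decomposes as $R\otimes_\cO \cO X$, and every block of $\cO G$ covering $b$ is Morita equivalent to $b$. Since each $\cT(b)$ in Proposition~\ref{Picard} has order a product of powers of $2$ and $3$, only $p=3$ demands attention; case (v), where $\cT(b)\cong C_2\wr C_2$ has no element of order $3$, is then immediate.

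For cases (i)--(iv) with $p=3$, I would proceed in two steps. First, enumerate the $\cT(b)$-conjugacy classes of subgroups of order $3$ using the explicit descriptions in Proposition~\ref{Picard}: four classes in (i) (trivial, two non-conjugate coordinate classes, one diagonal), two in each of (ii) and (iv), and three in (iii) (trivial, one coordinate class into which both coordinates merge via the swap, and one diagonal). Second, for each non-trivial class exhibit a finite group $H$ with a block $B_H$ of the listed Morita type which covers, via a normal subgroup of index $3$, a block Morita equivalent to $b$ and whose associated element of $\cT(b)$ lies in that conjugacy class. The candidates come directly from the statement: in (i) the three non-trivial classes are realised by the non-principal blocks of $\cO((C_2)^4\rtimes 3_+^{1+2})$, by $\cO(A_4\times A_4)$, and by $\cO D$, the assignment being determined by how the $C_3$-image projects onto the source-algebra factor $\Out_D(A)=\Out_{D_1}(b_1)\times \Out_{D_2}(b_2)$ and the outer-fusion factor $\Out(D,\cF)$ identified in the proof of Proposition~\ref{Picard}; in (ii) and (iv) the unique non-trivial class is realised by $\cO(C_2\times C_2\times A_5)$ and $\cO(A_5\times A_4)$ respectively; and in (iii) the two non-trivial $\cT(b)$-classes are matched to $b$ itself and to $\cO(A_4\times C_2\times C_2)$, two of the three crossed products having the same Morita type because $\Out(R)$-conjugacy (or already plain Morita equivalence of the resulting algebras) is strictly coarser than $\cT(b)$-conjugacy.

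The main obstacle is this realisation step, and in particular verifying that the assignment of conjugacy classes to the listed Morita types is both correct and exhaustive. The odd-order hypothesis on $X$ rules out contributions from the $C_2$-direct factors of $\cT(b)$ (the swap of the two non-trivial simples of $\cO A_5$ in (ii) and (iv), and the tensor-factor swap in (iii)), reducing the matching problem to the $C_3$-content inside each factor of the decomposition of $\cT(b)$. The counts of irreducible Brauer characters supplied by~\cite{sa17} together with the inertial quotients listed in~\cite{ks13} then provide the final check, confirming that no Morita types beyond those listed can arise and that the candidates genuinely realise distinct crossed products.
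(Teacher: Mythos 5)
Your set-up coincides with the paper's: pass to a basic algebra $R=fbf$ with $\varphi(f)=f$ via Proposition~\ref{Picard}, use the vanishing of $H^2$ and $H^3$ to reduce the weak isomorphism type of the crossed product to the $\Out(R)$-class of $\alpha\colon X\to\Out(R)$ with image in $\cT(b)$, dispose of all primes other than $3$ and of case (v), and count the conjugacy classes of order-$3$ subgroups of $\cT(b)$ (your counts agree with the paper's). The genuine gap is at the step that carries all the content: determining, for each nontrivial class of $\alpha$, the Morita class of the resulting crossed product. You assert that ``the candidates come directly from the statement'', which is circular, and you never verify that a proposed realisation induces the intended class of $\alpha$, nor compute the covering block it produces. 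The paper's proof consists precisely of these verifications, via explicit realisations: $A_4\times A_4$ over $A_4\times C_2\times C_2$, the block of $PSL_3(7)$ Morita equivalent to $\cO A_4$ covered by a nilpotent block of $PGL_3(7)$, and faithful blocks of $(C_2)^4\rtimes 3_+^{1+2}$ over maximal subgroups.

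Moreover your proposed matching in case (iii) is demonstrably false and cannot be rescued by the remark that Morita equivalence is coarser than $\cT(b)$-conjugacy. If $\alpha$ has diagonal image in $S_3\wr C_2$, its generator permutes the nine simple modules of $R$ in three free orbits, so the crossed product has exactly three simple modules and hence is not Morita equivalent to $b$, which has nine; and summing the Cartan matrix $(I+J)\otimes(I+J)$ of $kA_4\otimes kA_4$ over these orbits gives Cartan matrix $I+5J$ (diagonal entries $6$, off-diagonal $5$), not the Cartan matrix $4(I+J)$ of $\cO(A_4\times C_2\times C_2)$, so it is not Morita equivalent to that block either (equivalently, an orbit count on the sixteen ordinary characters gives eight irreducible characters rather than sixteen). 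This class is genuinely realised, for instance by the index-$3$ subgroup of $PGL_3(7)\times PGL_3(7)$ lying diagonally over $N=PSL_3(7)\times PSL_3(7)$, so it cannot simply be discarded; it is exactly the delicate point of the proposition, and indeed the paper's own proof assigns it to a non-principal block of $(C_2)^4\rtimes 3_+^{1+2}$ (a block with one simple module, so even that identification requires scrutiny), while your assignment to the class of $b$ contradicts an elementary Morita invariant. Without settling this case your argument does not prove the statement. Two smaller slips: in (ii) and (iv) you have interchanged the realisations (the nontrivial class in (ii) is realised by $A_5\times A_4$, that in (iv) by $PGL_3(7)\times A_5$, giving the principal block of $C_2\times C_2\times A_5$), and the closing appeal to the numerical data of~\cite{sa17} and~\cite{ks13} is too coarse to identify Morita equivalence classes and so cannot serve as the claimed final check.
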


\begin{proof}
Let $B$ be a block of $G$ covering $b$. Either $G[b]=G$ or $G[b]=N$. By Proposition \ref{innerautos} if $G[b]=G$, then $B$ is Morita equivalent to $b$, in which case we are done. Hence suppose $G[b]=N$, so by Proposition \ref{innerautos} $B$ is the unique block of $G$ covering $b$. Note that $B$ and $b$ share a defect group.

We treat cases (i)-(iv) first. Case (v) uses a different strategy since we do not know the Picard group for the principal block of $\cO(A_5 \times A_5)$.

Define $X=G/N$. Consider an idempotent $f$ of $b$ such that $R:=fbf$ is a basic algebra for $b$. Then we may consider $\L:=fBf$ as a crossed product of $fbf$ with $X$ and $\L$ is Morita equivalent to $B$. Note that $\Pic(b)=\Pic(R)$.

The weak equivalence classes of crossed products of $R$ with $X$ are in 1-1 correspondence with equivalence classes of homomorphisms $\alpha:X \rightarrow \Out(R)$. Hence to determine the possible Morita equivalence classes for $B$ we must determine the equivalence classes of homomophisms $\alpha$.

Now if $\ker(\alpha)=X$, then $\L \cong \cO X \otimes fbf$, Morita equivalent to a product of copies of $fbf$. But this contradicts the fact that $\L$ is Morita equivalent to $B$. By Proposition \ref{Picard}, in each case under consideration $\Pic(b)$ has order divisible only by the primes $2$ and $3$, and so we may suppose that $|X|=3$ and that $\ker(\alpha)=1$.

 In many of the cases we will be making use of the example $PSL_3(7)$ where there is a block which is Morita equivalent to $\cO A_4$ and covered by a nilpotent block of $PGL_3(7)$.

(i) Suppose $b$ is Morita equivalent to $\cO (A_4 \times C_2 \times C_2 )$. By Proposition \ref{Picard} we have $\Pic(b) \cong S_3 \times S_4$ and so there are three possibilities for $\alpha$ up to equivalence (recall that $\alpha$ is assumed to be faithful). The three possible Morita equivalence types for $B$ are given by: $\cO (A_4 \times A_4)$, realised when $N$ is $A_4 \times C_2 \times C_2$; $\cO D$, realised when $G= PGL_3(7) \times C_2 \times C_2$, $N= PSL_3(7) \times C_2 \times C_2$; a non-principal block of $\cO(C_2)^4 \rtimes 3_+^{1+2}$, where the centre of $3_+^{1+2}$ acts trivially, achieved when $N$ is a maximal subgroup of $G=(C_2)^4 \rtimes 3_+^{1+2}$.

(ii) Suppose $b$ is Morita equivalent to the principal block of $\cO(A_5 \times C_2 \times C_2)$. We have $\Pic(b) \cong C_2 \times S_4$ and so there is just one possibility for $\alpha$ up to equivalence, and this is achieved with $G=A_5 \times A_4$.

(iii) Suppose $b$ is Morita equivalent to $\cO(A_4 \times A_4)$. We have $\Pic(b) \cong S_3 \wr C_2$. There are two non-trivial possibilities for $\alpha$ up to equivalence. They give rise to an algebra Morita equivalent to $\cO (A_4 \times C_2 \times C_2)$, realised with $G=A_4 \times PGL_3(7)$, and $\cO((C_2)^4 \rtimes C_3))$ where the $C_3$ acts with only one fixed point. The latter case is realised when $PSL_3(7) \times PSL_3(7) = N < G < PGL_3(7) \times PGL_3(7)$, with $G$ the preimage of the diagonal subgroup of $(PGL_3(7)/PSL_3(7)) \times (PGL_3(7)/PSL_3(7))$.

(iv) Suppose $b$ is Morita equivalent to the principal block of $\cO(A_4 \times A_5)$. We have $\Pic(b) \cong S_3 \times C_2$ and so there is just one possibility for $\alpha$ up to equivalence, and this is realised with $G=PGL_3(7) \times A_5$.

(v)  Finally suppose $b$ is Morita equivalent to the principal block of $\cO(A_5 \times A_5)$. Then $l(b)=9$ and $b$ has a distinguished simple module identified by the unique column of the decomposition matrix of $b$ with all entries equal to $1$, necessarily fixed under the conjugation action of $G$. Write $w:=|G/N|$. Recalling that $B$ is the unique block of $G$ covering $b$, by Clifford theory we have: $l(B)=9w$ if $w \geq 11$; $l(B) \geq 15$ if $w=7$; $l(B) \geq 21$ if $w=5$; $l(B) \in \{ 11,19,27\}$ if $w=3$. By~\cite[Proposition 2.1]{ks13} either $l(B) \leq 9$ or $l(B)=15$, this last case occuring when $B$ has inertial quotient $C_{15}$. Hence we are done unless possibly $w=7$ and $G$ acts with an orbit of length $7$. However, further examination of the decomposition matrix for $b$ reveals that there are precisely two columns with twelve non-zero entries, two with eight non-zero entries and four with four non-zero entries, so such an orbit of length seven is impossible. We have exhausted all possibilities, so conclude that $B$ must be Morita equivalent to $b$ in this case.

\end{proof}

\begin{corollary}
\label{faithfulblocks}
Consider $G=(C_2)^4 \rtimes 3_+^{1+2}$, where the centre of $3_+^{1+2}$ acts trivially. The $2$-blocks of $\cO G$ correspond to the simple modules of $Z(3_+^{1+2})$, and the two non-principal blocks are Morita equivalent. Further, these blocks are Morita equivalent to the two non-principal blocks of $\cO ((C_2)^4 \rtimes 3_-^{1+2})$.
\end{corollary}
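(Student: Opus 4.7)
The plan is to exploit the central subgroup $Z := Z(3_+^{1+2}) \cong C_3$. Since $Z$ acts trivially on $(C_2)^4$ by hypothesis and is central in $3_+^{1+2}$, we have $Z \leq Z(G)$; as a central $2'$-subgroup it yields the ring decomposition $\cO G = \bigoplus_{\chi \in \Irr(Z)} e_\chi \cO G$ via the primitive idempotents $e_\chi = \tfrac{1}{|Z|}\sum_{z \in Z} \chi(z^{-1}) z$. A standard Clifford-theoretic count of $l(e_\chi \cO G)$ --- reducing simple $k$-modules through the normal Sylow $2$-subgroup $V := (C_2)^4$ to simple modules of the twisted group algebra $k^c(C_3 \times C_3)$ of the inertial quotient (with $c$ coming from the commutator form on $3_+^{1+2}/Z$) --- shows that each $e_\chi \cO G$ is a single block: for trivial $\chi$, $k^c(C_3 \times C_3) = k(C_3 \times C_3)$ has $9$ simples all lying in the unique block of $\cO(A_4 \times A_4)$, while for non-trivial $\chi$ the form is non-degenerate and $k^c(C_3 \times C_3) \cong M_3(k)$, giving a single simple module. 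This identifies the three blocks of $\cO G$ with $\Irr(Z)$: the trivial character yields the principal block (Morita equivalent to $\cO(G/Z) \cong \cO(A_4 \times A_4)$), and the two non-trivial characters yield the non-principal blocks $B_\chi, B_{\chi^{-1}}$.

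To prove $B_\chi \cong B_{\chi^{-1}}$ as $\cO$-algebras (hence the Morita equivalence), I would construct an automorphism $\sigma \in \Aut(G)$ that inverts $Z$; such a $\sigma$ interchanges $e_\chi$ and $e_{\chi^{-1}}$ and restricts to the desired isomorphism. Write $(C_2)^4 = V_1 \oplus V_2$ so that the two $C_3$-factors of $3_+^{1+2}/Z \cong C_3 \times C_3$ act faithfully on distinct summands (possible since, as noted in the introduction, the faithful $C_3 \times C_3$-action on $(C_2)^4$ is unique up to $GL_4(2)$-conjugacy). The involution $\psi \in GL_4(2)$ exchanging $V_1$ and $V_2$ normalises the inertial quotient, inducing the swap of the two $C_3$-factors in $GL_2(3) = \Aut(3_+^{1+2}/Z)$, which has determinant $-1$. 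Since $\Aut(3_+^{1+2}) \twoheadrightarrow GL_2(3)$ is surjective and the induced action on $Z$ is by the determinant (via the commutator pairing $3_+^{1+2}/Z \times 3_+^{1+2}/Z \to Z$), any lift of this swap to $\Aut(3_+^{1+2})$ inverts $Z$; combined with $\psi$ on $(C_2)^4$, this yields the required $\sigma$.

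For the final assertion, let $G' := (C_2)^4 \rtimes 3_-^{1+2}$ and $Z' := Z(3_-^{1+2})$. The same analysis yields three blocks of $\cO G'$ indexed by $\Irr(Z')$, and I would identify each non-principal $B'_\chi := e_\chi \cO G'$ with the corresponding $B_\chi$ by an explicit algebra isomorphism. Identify $3_+^{1+2}/Z \cong 3_-^{1+2}/Z' \cong C_3 \times C_3$ (again using uniqueness of the $C_3 \times C_3$-action on $(C_2)^4$), and choose lifts $a, b \in 3_+^{1+2}$ of a basis with $a^3 = b^3 = 1$ and $[a, b] = z \in Z$, together with lifts $x, y \in 3_-^{1+2}$ with $x^3 = z' \in Z'$, $y^3 = 1$ and $[x, y] = z'$ (possible since $3_-^{1+2}$ has non-central elements of order $3$). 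Then $B_\chi$ is generated over $\cO$ by the image of $(C_2)^4$ together with $a, b$, subject to $a^3 = b^3 = 1$, $[a, b] = \chi(z) \in \cO^\times$ and the fixed $C_3 \times C_3$-action relations, while $B'_\chi$ has the analogous presentation with $x^3 = \chi(z')$ in place of $a^3 = 1$. Since $K$ is large enough for $G'$, it contains a primitive $9$th root of unity and hence a cube root $\lambda$ of $\chi(z')$; rescaling $x \mapsto \lambda^{-1} x$ in $B'_\chi$ converts $x^3 = \chi(z')$ into $(\lambda^{-1} x)^3 = 1$ while leaving the commutator and action relations intact, giving the required isomorphism $B_\chi \cong B'_\chi$. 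The chief obstacle is the construction of $\sigma$, which reduces to the observation that the normaliser in $GL_4(2)$ of the inertial quotient surjects onto a subgroup of $GL_2(3)$ containing an element of determinant $-1$; the final identification across $3_+^{1+2}$ and $3_-^{1+2}$ is then a formal rescaling by a suitable root of unity.
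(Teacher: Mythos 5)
Your argument is correct in substance but follows a genuinely different route from the paper. The paper deduces the corollary from its crossed-product machinery: any faithful block $B$ of either group has $l(B)=1$ and covers a block $b$ of a maximal (index $3$) subgroup that is Morita equivalent to $\cO(C_2\times C_2\times A_4)$, so Proposition \ref{applying_cross_products}(i) --- which rests on the computation $\cT(b)\cong S_3\times S_3$ in Proposition \ref{Picard} --- leaves only one admissible Morita class for a block with one simple module, and all four faithful blocks therefore land in it. You instead work entirely inside the two groups: you identify the three blocks via the central idempotents of $Z$, realise each faithful block as a twisted group algebra of $(C_2)^4\rtimes(C_3\times C_3)$ over $\cO$, interchange the two faithful blocks of the plus-type group by a group automorphism inverting $Z$, and identify the plus- and minus-type faithful blocks by matching structure constants after rescaling a generator by a ninth root of unity (which does lie in $\cO$, being a root of unity of odd order in $K$ with $\cO$ integrally closed). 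This is more elementary and self-contained --- it avoids Proposition \ref{applying_cross_products} and the Picard-group input from \cite{bkl17} entirely --- and it yields the stronger conclusion that the four faithful blocks are isomorphic as $\cO$-algebras, not merely Morita equivalent; the paper's route is much shorter given the machinery it has already assembled for the main theorem. Two points to tighten: an automorphism of $C_3\times C_3$ interchanging the two distinguished $C_3$-subgroups need not have determinant $-1$ (for instance $e_1\mapsto e_2^{-1}$, $e_2\mapsto e_1$ has determinant $1$), so you must either choose $\psi$ to intertwine fixed generators exactly or replace it by the element of $N_{GL_4(2)}(C_3\times C_3)$ inverting one factor and centralising the other --- you do flag this reduction at the end, and the required element of determinant $-1$ exists; and in the final identification you should pair the faithful characters of $Z$ and $Z'$ so that $\chi([a,b])=\chi'([x,y])$, any mismatch being absorbed by the equivalence of the two blocks of $G$ you have already proved.
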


\begin{proof}
Let $B$ be any faithful $2$-block of $G=(C_2)^4 \rtimes 3_+^{1+2}$ or $(C_2)^4 \rtimes 3_-^{1+2}$. Then $l(B)=1$. Take a maximal subgroup $N$ of $G$ and a block $b$ of $N$ covered by $B$. Then $N \cong ((C_2)^4 \rtimes C_3) \times C_3$ or $(C_2)^4 \rtimes C_9$ and $b$ is Morita equivalent to $\cO (C_2 \times C_2 \times A_4)$. By Proposition \ref{applying_cross_products} there is only one possibility for the Morita equivalence class of $B$ under the restriction that there is just one simple module.

\end{proof}


\section{Proof of the main theorem}
\label{proofofmaintheorem}

We first address the case where the defect group is normal.

\begin{lemma}
\label{normaldefect}
Let $B$ be a block of $\cO G$ for a finite group $G$ with normal defect group $D \cong (C_2)^4$. Then $B$ is Morita equivalent to a block as in (a) or (b)(i), (ii), (iv), (v), (vi), (viii), (xi) or (xiii) in Theorem \ref{maintheorem}.
\end{lemma}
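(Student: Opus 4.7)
The plan is to use the standard description of blocks with normal defect groups as twisted group algebras, and then walk through the eight possible inertial quotients listed in the introduction.

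First I would observe that since $D$ is abelian and normal in $G$, we have $C_G(D) \lhd G$, and the block $b$ of $\cO C_G(D)$ covered by $B$ has $D$ as a central defect group, hence is nilpotent. Proposition~\ref{kp} applied with $N = C_G(D)$ then produces a central $\ell'$-extension $\tilde L$ of a group $L$ with $L \cong D \rtimes E$, where $E = G/C_G(D)$ is the inertial quotient, together with a block $\tilde B$ of $\cO \tilde L$ Morita equivalent to $B$ and with defect group $\cong D$. Since the kernel of $\tilde L \to L$ is a central $\ell'$-subgroup, $\tilde B$ is determined by a linear character of that kernel, which yields a class $\alpha \in H^2(E, k^{\times})$; equivalently, $\tilde B$ is Morita equivalent to a twisted group algebra $\cO_\alpha(D \rtimes E)$.

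Next, from the list of possible inertial quotients in the introduction, every candidate other than $C_3 \times C_3$ has trivial Schur multiplier, so $\alpha$ must be trivial and $B$ is Morita equivalent to $\cO(D \rtimes E)$, with $E$ acting on $D$ according to its type. A short case-by-case check then identifies these algebras directly with the groups listed in the theorem: $E = 1$ gives (b)(i); $E$ of type $(C_3)_1$ (acting as in $A_4 \times (C_2)^2$) gives (b)(ii); type $(C_3)_2$ gives (b)(iv); $C_5$ gives (b)(v); $C_7$, acting faithfully on a $(C_2)^3$-direct summand of $D$ with a fixed $C_2$, gives (b)(vi); $C_7 \rtimes C_3$ gives (b)(xiii); and $C_{15}$ gives (b)(xi).

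For the remaining case $E = C_3 \times C_3$, the Schur multiplier is $C_3$, so there are two classes for $\alpha$ up to equivalence. The trivial class yields $\cO(D \rtimes (C_3 \times C_3)) \cong \cO(A_4 \times A_4)$, namely case (b)(viii). A non-trivial class corresponds, via the Schur covering group $3_+^{1+2}$ of $C_3 \times C_3$, to a faithful (non-principal) block of $\cO((C_2)^4 \rtimes 3_+^{1+2})$ with $Z(3_+^{1+2})$ acting trivially on $D$; its Morita equivalence class is case (a) by Corollary~\ref{faithfulblocks}. The main obstacle is precisely this last identification: one needs to be sure that the twisted algebra $\cO_\alpha(D \rtimes (C_3 \times C_3))$ arising from a non-trivial cocycle really is captured by a non-principal block of $(C_2)^4 \rtimes 3_+^{1+2}$, which is exactly what Corollary~\ref{faithfulblocks} delivers.
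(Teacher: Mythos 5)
Your overall strategy is the same as the paper's: describe $B$ as (Morita equivalent to) a twisted group algebra $\cO_\alpha(D\rtimes E)$ for the inertial quotient $E$, use the fact (recorded in the introduction) that every admissible $E$ except $C_3\times C_3$ has trivial Schur multiplier, and dispose of the nontrivially twisted $C_3\times C_3$ case via Corollary~\ref{faithfulblocks}. The paper obtains the twisted-group-algebra description in one stroke by citing the main result of~\cite{ku85}, which says precisely that a block with normal defect group $D$ is Morita equivalent to $\cO_\gamma(D\rtimes E)$ with $E$ the inertial quotient acting in the natural way; you instead try to manufacture this from Proposition~\ref{kp} applied to $N=C_G(D)$, and that substitute derivation has two concrete gaps.

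First, Proposition~\ref{kp} requires the covered block of $\cO N$ to be $G$-stable, and the block $b$ of $\cO C_G(D)$ covered by $B$ need not be: you must first pass to the Fong--Reynolds correspondent over the stabilizer $I_G(b)$. This step is also what makes your identification of the quotient with the inertial quotient legitimate, since the inertial quotient is $I_G(b)/C_G(D)$ rather than $G/C_G(D)$ (the latter need not even have odd order a priori). Second, and more substantially, Proposition~\ref{kp} as stated only gives the abstract isomorphisms $M\cong D\cap N$ and $L/M\cong G/N$; it says nothing about the action of $L/M$ on $M$, nor does it identify $\tilde B$ beyond being Morita equivalent to $B$ with defect group $\cong D$. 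Schur--Zassenhaus then yields $L\cong D\rtimes E$ for \emph{some} action, but your case-by-case identification needs the action to be the inertial-quotient action (a trivial action, for instance, would produce a nilpotent block), and it needs $\tilde B$ to be Morita equivalent to the whole twisted algebra $\cO_\alpha(D\rtimes E)$ determined by a character of the central kernel, not merely to some unidentified block of it. Neither statement can be read off from Proposition~\ref{kp} as formulated here; they are exactly the content of the main theorem of~\cite{ku85} (or of the finer statements in~\cite{kp90}), which is what the paper's proof invokes. Once that citation is in place, your case analysis --- trivial multiplier forcing the untwisted algebras in (b)(i), (ii), (iv), (v), (vi), (xi), (xiii), and for $C_3\times C_3$ the trivial twist giving $\cO(A_4\times A_4)$ and a nontrivial twist giving a faithful block of $(C_2)^4\rtimes 3_+^{1+2}$, identified as case (a) by Corollary~\ref{faithfulblocks} --- is correct and agrees with the paper.
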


\begin{proof}
This follows from the main result of~\cite{ku85}, applying Lemma \ref{faithfulblocks} when the inertial quotient is $C_3 \times C_3$.
\end{proof}

For a block $B$, write $\IBr(B)$ for the set of irreducible Brauer characters of $B$ and $l(B)=|\IBr(B)|$.

The following lemma deals for example with the situation $SL_n(q) \cong N \lhd G$ where $G$ is an extension by field automorphisms and the block of $SL_n(q)$ is nilpotent covered.

\begin{lemma}
\label{nilpotentcovered}
Let $G$ be a finite group and $N \lhd G$ such that $G/N$ is solvable. Let $B$ be a quasiprimitive block of $\cO G$ with abelian defect group $D$ covering a block $b$ of $\cO N$ also with defect group $D$. If $b$ is nilpotent covered, then $B$ is Morita equivalent to a block of a finite group with normal defect group. In particular, if $D \cong (C_2)^4$, then $B$ is Morita equivalent to one of the blocks in (a) or (b)(i), (ii), (iv), (v), (vi), (viii), (xi) or (xiii) of Theorem \ref{maintheorem}.
\end{lemma}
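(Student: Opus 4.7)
My plan is to derive the conclusion by a short chain of the tools already collected, once the key observation $\ell\nmid[G:N]$ is established.

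Since $B$ is quasiprimitive, $b$ is $G$-stable, and since $b$ is nilpotent covered, Proposition \ref{inertial}(ii) immediately yields that $b$ is inertial.

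The main step is to check that $\ell\nmid[G:N]$. Fix a maximal $b$-subpair $(D,e_D)$. The $G$-stability of $b$ combined with the transitivity of $N$ on maximal $b$-subpairs gives, by a Frattini argument, $G=N\cdot N_G(D,e_D)$, hence $[G:N]=[N_G(D,e_D):N_N(D,e_D)]$. A defect group $P$ of $B$ may be chosen so that $(P,e_P)\supseteq(D,e_D)$ for a suitable block $e_P$ of $C_G(D)$; by maximality of $(P,e_P)$, $P$ is a Sylow $\ell$-subgroup of $N_G(D,e_D)$, and $D$ is a Sylow $\ell$-subgroup of $N_N(D,e_D)$ because $D$ is a defect group of $b$. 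Therefore $|P/D|=[G:N]_\ell$, and since the hypothesis gives $P=D$, we conclude $\ell\nmid[G:N]$.

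Proposition \ref{inertial}(iii) now applies and shows $B$ is itself inertial, i.e.\ basic (in particular ordinary) Morita equivalent to its Brauer correspondent $B_1$ in $\cO N_G(D)$. Since $D\lhd N_G(D)$, $B_1$ has normal defect group, which proves the main claim. The ``in particular'' statement then follows directly from Lemma \ref{normaldefect}, which classifies the Morita equivalence classes of blocks with normal defect group $(C_2)^4$ as precisely the items in (a) and (b)(i), (ii), (iv), (v), (vi), (viii), (xi), (xiii) of Theorem \ref{maintheorem}.

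I do not foresee a substantial obstacle. The only slightly delicate point is the verification that $\ell\nmid[G:N]$, but this is standard Alperin--Brou\'e subpair theory once one uses the hypothesis that $B$ and $b$ share a defect group; from there the proof is a matter of unpacking Proposition \ref{inertial}(iii) and quoting Lemma \ref{normaldefect}.
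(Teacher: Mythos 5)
Your overall strategy (show $\ell\nmid[G:N]$, then quote Proposition \ref{inertial}(ii),(iii) and Lemma \ref{normaldefect}) is sound in outline, and the statement $\ell\nmid[G:N]$ is indeed true under the hypotheses of the lemma; but your proof of that key step is not correct. The two Sylow-type claims it rests on are false in general: a defect group of a block need not be a Sylow $\ell$-subgroup of the normalizer of a maximal Brauer pair. What is true is that the inertial quotient $N_N(D,e_D)/DC_N(D)$ is an $\ell'$-group, but $C_N(D)$ may contain $\ell$-elements outside $D$. For instance, take $\ell=2$, $N=C_2\times S_3$ and $b$ the tensor product of the block of $C_2$ with the $2$-defect-zero block of $S_3$; then $D=C_2\times 1$, $N_N(D,e_D)=N$ has order $12$, and $D$ is not Sylow in it, so your claim (2) already fails, and claim (1) fails for the same reason. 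More decisively, your argument never uses the solvability of $G/N$ (nor quasiprimitivity beyond $G$-stability of $b$), yet without solvability the conclusion $\ell\nmid[G:N]$ is simply false: take $G=SL_2(5)$, $N=Z(G)\cong C_2$, $\ell=3$, $B$ the defect-zero block containing the faithful character of degree $6$ and $b$ the ($G$-stable, nilpotent, hence nilpotent covered) faithful block of $N$; both have trivial defect group, $B$ is quasiprimitive, but $3\mid[G:N]=60$. So any proof of $\ell\nmid[G:N]$ must genuinely use the solvability hypothesis, and the subpair/Frattini computation cannot deliver it.

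The paper's proof supplies exactly the missing argument, working up a normal series rather than in one step. Let $M$ be the preimage of $O_{\ell'}(G/N)$; by quasiprimitivity $B$ covers a unique ($G$-stable) block $B_M$ of $M$, which is inertial by Proposition \ref{inertial}(iii). Then let $M_1$ be the preimage of $O_{\ell}(G/M)$: since $M_1/M$ is an $\ell$-group, $B_{M_1}$ is the unique block of $M_1$ covering $B_M$, and \cite[15.1]{alp} forces $M_1=MD=M$ because the defect group $D$ already lies in $N\leq M$ (this is where the hypothesis that $b$ and $B$ share $D$ is really used: over an $\ell$-index normal subgroup the defect group would have to grow). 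Now $O_\ell(G/M)=O_{\ell'}(G/M)=1$, and solvability of $G/N$ gives $M=G$, so $B$ itself is inertial; from there your final step (Brauer correspondent in $N_G(D)$ has normal defect group, then Lemma \ref{normaldefect}) agrees with the paper. As a minor point, your containment $(P,e_P)\supseteq(D,e_D)$ also mixes a $B$-Brauer pair with a $b$-Brauer pair (and $e_P$ should be a block of $C_G(P)$, not $C_G(D)$); the relevant correct statement is Kn\"orr's theorem that some defect group $P$ of $B$ satisfies $P\cap N=D$ and normalizes $(D,e_D)$, which is weaker than the Sylow property you need.
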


\begin{proof}
By Proposition \ref{inertial}(ii) $b$ is inertial, i.e., basic Morita equivalent to its Brauer correspondent $c$ in $N_N(D)$. Let $M$ be the preimage in $G$ of $O_{\ell'}(G/N)$ and $B_M$ the unique block of $M$ covered by $B$. By Proposition \ref{inertial}(iii) $B_M$ is inertial. Write $M_1$ for the preimage in $G$ of $O_\ell(G/M)$ and let $B_{M_1}$ be the unique block of $M_1$ covered by $B$. Note that $B_M$ and $B_{M_1}$ both have defect group $D$. Since $M_1/M$ is an $\ell$-group $B_{M_1}$ is the unique block of $M_1$ covering $B_M$. But then by~\cite[15.1]{alp} $M_1=MD$, and so $M=M_1$. Since $G/N$ is solvable this implies that $M=G$, and $B$ is inertial. The last part follows by Lemma \ref{normaldefect}.
\end{proof}

We prove Theorem \ref{maintheorem}.
\medskip

\begin{proof}
Let $B$ be a block of $\cO G$ for a finite group $G$ with defect group $D \cong (C_2)^4$ with $([G:O_{2'}(Z(G))],|G|)$ minimised in the lexicographic ordering such that $B$ is not Morita equivalent to any of the sixteen blocks listed in the theorem.

Suppose $N \lhd G$ and $b$ is a block of $\cO N$ covered by $B$. Write $I=I_G(b)$ for the stabiliser of $b$ under conjugation. Then there is a unique block $B_I$ of $I$ covering $b$ with Brauer correspondent $B$ (the Fong-Reynolds correspondent) and $B_I$ is Morita equivalent to $B$. Further $B$ and $B_I$ share a defect group, hence by minimality $I=G$. Applying this to all normal subgroups of $G$, we have that $B$ is quasiprimitive, that is, for every $N \lhd G$ each block of $\cO N$ covered by $B$ is $G$-stable.

By Corollary \ref{kpcor} and minimality, if $N \lhd G$ and $B$ covers a nilpotent block of $\cO N$, then $N \leq Z(G) O_2(G)$. In particular $O_{2'}(G) \leq Z(G)$.

Note that $O^2(G)D=G$. This holds by~\cite[15.1]{alp} since any block of $O^2(G)$ covered by $B$ is $G$-stable and $B$ is the unique block of $G$ covering it.

Following~\cite{as00} write $E(G)$ for the \emph{layer} of $G$, that is, the central product of the subnormal quasisimple subgroups of $G$ (the \emph{components}). Write $F(G)$ for the Fitting subgroup, which in our case is $F(G)=Z(G)O_2(G)$. Write $F^*(G)=F(G)E(G) \lhd G$, the generalised Fitting subgroup, and note that $C_G(F^*(G)) \leq F^*(G)$.  Let $b^*$ be the unique block of $\cO F^*(G)$ covered by $B$.

We have $E(G) \neq 1$, since otherwise $F^*(G)=F(G)=Z(G)O_2(G)$ and $D \leq C_G(F^*(G)) \leq F^*(G)$, so that $D \lhd G$, a contradiction by Lemma \ref{normaldefect}. Write $E(G)=L_1 *\cdots *L_t$, where each $L_i$ is a component of $G$ (we have shown that $t \geq 1$). Now $B$ covers a block $b_E$ of $\cO E(G)$ with defect group contained in $D$, and $b_E$ covers a block $b_i$ of $\cO L_i$. Since $b_E$ is $G$-stable, for each $i$ either $L_i \lhd G$ or $L_i$ is in a $G$-orbit in which each corresponding $b_i$ is isomorphic (with equal defect). Since $B$ has defect four, it follows that if $t \geq 3$, then $B$ covers a nilpotent block of a normal subgroup generated by components of $G$, a contradiction. Hence $t \leq 2$, and in particular $G/F^*(G)$ is solvable by the Schreier conjecture.

We have $|F^*(G) \cap D| \geq 4$, since otherwise $B$ covers a nilpotent block of $F^*(G)$, a contradiction since $F^*(G)$ is not central in $G$.

In the next part of the proof we will show that $G$ (as a minimal counterexample) has a proper normal subgroup $N$ containing $D$ such that the unique block $b$ of $N$ covered by $B$ is of type $(C_3)_1$ or $C_3 \times C_3$.

Suppose $|F^*(G) \cap D|=4$. Then $F^*(G) \cap D$ is normal in $N_G(D)$ and so any non-nilpotent block of $O^{2'}(F^*(G) \langle D^g:g \in G \rangle)$ has type $(C_3)_1$, $(C_3)_2$ or $C_3 \times C_3$. We claim that $O^{2'}(F^*(G) \langle D^g:g \in G \rangle)$ is a proper subgroup of $G$. For suppose $O^{2'}(F^*(G) \langle D^g:g \in G \rangle)=G$. Since $O^2(G)=G$, then $F^*(G) \langle D^g:g \in G \rangle = G$. Since $G/F^*(G)$ is solvable it follows that $O^2(G) \neq G$. Since $G=O^2(G)D$, it follows that $G$ has a normal subgroup $H$ of index $2$ containing $F^*(G)$ such that $G=HD$. Hence $B$ must have type $(C_3)_1$ and we may apply Proposition \ref{KK} to show that by minimality $B$ is Morita equivalent to a block on the list. Hence $O^{2'}(F^*(G) \langle D^g:g \in G \rangle)$ is a proper subgroup of $G$ as claimed, and we take $N=O^{2'}(F^*(G) \langle D^g:g \in G \rangle)$. As above $O^2(N) \neq N$ and $N$ has a normal subgroup of index $2$, so that we may rule out the possibilities that $b$ has type $(C_3)_2$ or $C_3 \times C_3$.

Suppose that $|F^*(G) \cap D|=8$. It follows from Proposition \ref{classification16} that one of more of the following occurs: $b^*$ is nilpotent covered; $b^*$ has inertial quotient $C_3$; or $E(G)$ is isomorphic to one of $SL_2(8)$, $^2G_2(3^{2m+1})$, $J_1$ or $Co_3$. In the second case we may take $N=F^*(G) \langle D^g:g \in G \rangle$ and it is clear that $b$ must be of type $(C_3)_1$. Since $O^2(G)=G$ we have $N \neq G$. In the third case, each of the groups $SL_2(8)$, $^2G_2(3^{2m+1})$, $J_1$ and $Co_3$ has odd order outer automorphism group, so $G$ has a direct factor of order $2$, contradicting $O^2(G)=G$. Suppose that $b^*$ is nilpotent covered and does not have inertial quotient $C_3$. By Proposition \ref{inertial} we must have that $b^*$ is inertial and Morita equivalent to $(C_2)^3 \rtimes C_7$. Now $G/F^*(G)$ is solvable, so by Lemma \ref{solvablequotient} $DF^*(G)/F^*(G)$ is a Sylow $2$-subgroup of $G/F^*(G)$, so that $[G:F^*(G)]_2=2$. It follows that $G/F^*(G)$ has a normal $2$-complement. Write $M$ for the preimage in $G$ of this normal $2$-complement and write $B_M$ for the unique block of $\cO M$ covered by $B$, so $B_M$ also covers $b^*$. By Proposition \ref{inertial} $B_M$ is also inertial with abelian inertial quotient. Since $B_M$ has defect group $(C_2)^3$, this inertial quotient must then be cyclic, that is $C_7$ or $C_3$. By~\cite[Corollary 3.7]{mu13} $G$ acts as inner automorphisms on $B_M$, so in particular every simple $B_M$-module is $G$-stable. Hence $l(B)=l(B_M) \in \{ 3,7\}$ and $k(B)=2k(B_M)=16$, so that by~\cite{ks13} $B$ also has cyclic inertial quotient. It follows by Proposition \ref{KK} that $B$ is Morita equivalent to $\cO (D \rtimes C_3)$ or $\cO (D \rtimes C_7)$, contradicting minimality.

Hence we may suppose that $D \leq F^*(G)$. We examine the possibilities for $O_2(G)$.

If $|O_2(G)| = 16$, then $O_2(G)=D$, a contradiction by Lemma \ref{normaldefect}. If $|O_2(G)|=8$, then as $E(G) \neq 1$, $B$ covers a nilpotent block of $E(G)$, a contradiction. If $|O_2(G)| = 4$, then $b^*$ must be of type $(C_3)_1$, $(C_3)_2$ or $C_3 \times C_3$. However $F^*(G)$ would have a normal subgroup of index $2$ and so we may rule out the cases of type $(C_3)_2$ and $C_3 \times C_3$. If $F^*(G)=G$, then $B$ is Morita equivalent to a block in the list, a contradiction. Hence we may take $N=F^*(G)$.

Hence $|O_2(G)| = 1$ or $2$, and $O_2(G) \leq Z(G)$.

Suppose that $t=1$. By Proposition \ref{classification16} one or more of:

(1) $b^*$ has type $(C_3)_1$; or

(2) $F^*(G)$ is isomorphic to one of $C_2 \times SL_2(8)$, $C_2 \times {}^2G_2(3^{2m+1})$, $C_2 \times J_1$, $C_2 \times Co_3$ or $SL_2(16)$, as in each of these cases the component must be simple (since the Schur multiplier is trivial); or

(3) $b^*$ is nilpotent covered.

In case (1) we may take $N=F^*(G)$.

Suppose case (2) occurs. If $F^*(G) \cong C_2 \times J_1$, $C_2 \times Co_3$ or $SL_2(16)$, then $\Out(F^*(G))=1$ and so $G=F^*(G)$. By~\cite{kmn11} the non-principal block of $Co_3$ with elementary abelian defect group of order $8$ is Morita equivalent to the principal block of $\Aut(SL_2(8))$ and so in each of these three cases $B$ is Morita equivalent to a block in the list, a contradiction. If $F^*(G) \cong C_2 \times SL_2(8)$, then $G \cong C_2 \times SL_2(8)$ or $C_2 \times \Aut(SL_2(8))$, again a contradiction. If $F^*(G) \cong C_2 \times {}^2G_2(3^{2m+1})$, then $G$ has $C_2$ as a direct factor and by~\cite[3.1]{ea16} $B$ is Morita equivalent to $b^*$. Hence by minimality $G \cong C_2 \times  {}^2G_2(3^{2m+1})$.  By~\cite[Example 3.3]{ok97}, which in turn uses~\cite{lm80}, $b^*$ is Morita equivalent to the principal block of $C_2 \times ^2G_2(3) \cong C_2 \times \Aut(SL_2(8))$, again a contradiction to minimality.

If (3) occurs, then we may apply Lemma \ref{nilpotentcovered} to obtain a contradiction.

Now suppose that $t=2$. Then $b_1$ and $b_2$ both have Klein four defect group and are non-nilpotent, and $b^*$ has type $C_3 \times C_3$. Hence we may take $N=F^*(G)$.

We have shown that there is a normal subgroup $N \lhd G$ containing $D$ and a block $b$ of $N$ covered by $B$ with type $(C_3)_1$ or $C_3 \times C_3$.

Write $J=G[b] \lhd G$, and let $B_J$ be the unique block of $\cO J$ covering $b$ and covered by $B$. By Proposition \ref{innerautos}(i) $B_J$ is source algebra equivalent to $b$, and so in particular is also of type $(C_3)_1$ or $C_3 \times C_3$. Hence we may assume (repeatedly applying the argument if necessary) that $G[b]=N$. Then by Proposition \ref{innerautos}(ii) $B$ is the unique block of $G$ covering $b$. Hence by~\cite[15.1]{alp} $[G:N]$ is odd since $B$ and $b$ share a defect group, and so $G/N$ is solvable (note that it is not strictly necessary to directly use the odd order theorem here, as in all the cases above $N$ contains $F^*(G)$ and $G/F^*(G)$ is solvable). Let $M \lhd G$ with $[G:M]$ is an odd prime. Let $B_M$ be the unique block of $M$ covered by $B$. By Lemma \ref{typeC3orC3xC3} $B_M$ has type $(C_3)_1$ or $C_3 \times C_3$.

Now by minimality $B_M$ is Morita equivalent to a block as in (a), (b)(ii), (b)(iii), (b)(viii), (b)(ix) or (b)(x) in the statement of the theorem. Suppose that $B_M$ is as in (a). Then $B_M$ is inertial as by minimality there is only one possibility for the Morita equivalence class of $B_M$ and of its Brauer correspondent in $N_M(D)$. So by Proposition \ref{inertial} $B$ is also inertial and by Lemma \ref{normaldefect} is Morita equivalent to one of the listed blocks, a contradiction. We now have that $B_M$ is Morita equivalent to one of the blocks considered in Proposition \ref{applying_cross_products}, which we apply to see that $B$ is one of the blocks listed in the statement of the theorem.

To see that the blocks in cases (a),(b) (i)-(xv) represent distinct Morita equivalence classes it suffices to note that the blocks in case (b) have distinct Cartan matrices and the basic algebras for the blocks in (a) and (b)(i) are not isomorphic.

That the blocks in case (a) cannot be Morita equivalent to a principal block follows from~\cite[6.13]{na} that if the principal block has only one simple module, then it is nilpotent.

Finally, we reference the literature that tells us that representatives of the Morita equivalence classes with the same inertial quotient and number of simple modules are derived equivalent. In the cases below splendid derived equivalences are established between the relevant blocks defined with respect to $k$. Then by~\cite[5.2]{ri96} there is a splendid derived equivalence over $\cO$.

By~\cite[\textsection 3]{ri96} the principal blocks of $kA_4$ and $kA_5$ are splendid derived equivalent. It follows that the blocks in cases (ii) and (iii) are derived equivalent, and that the blocks in cases (viii), (ix) and (x) are derived equivalent. The principal blocks of $kSL_2(16)$ and $k((C_2)^4 \rtimes C_{15})$ (the normalizer of a Sylow $2$-subgroup) are derived equivalent by~\cite{ho01}, and so the blocks in cases (xi) and (xii) are splendid derived equivalent. That the principal blocks of $kJ_1$ and $k((C_2)^3 \rtimes (C_7 \rtimes C_3))$ are derived equivalent follows from~\cite{go97}, and a published proof may be found in~\cite[\textsection 6.2.3]{cr13}, with the observation that the blocks are splendid derived equivalent. Hence the blocks in cases (xiii) and (xiv) are derived equivalent. Finally, the splendid derived equivalence between the blocks in cases (xiii) and (xv) follows from~\cite[Remark 3.4]{ok97} and~\cite[4.33]{cr13}.

\end{proof}

\begin{center} ACKNOWLEDGEMENTS \end{center}

I am deeply indebted to Jon Carlson, who wrote and ran MAGMA routines for calculating outer automorphism groups of the basic algebras defined over $k$, which helped me see their final structure and complete an earlier version of this paper. I also thank Markus Linckelmann for encouraging me to extend my results over $k$ to $\cO$ and for helpful discussions. The papers of Watanabe are essential in completing the classification over $\cO$, and I am indebted to Hu Xueqin for directing me to~\cite{wa05} and to Shigeo Koshitani for showing me~\cite{wa00} and~\cite{zh16}. Finally I thank Cesare Ardito for his careful reading of the manuscript and for his helpful comments, and Michael Livesey for some useful discussions.

\vspace{10mm}
Charles Eaton

School of Mathematics

University of Manchester

Oxford Road

Manchester

M13 9PL

United Kingdom

charles.eaton@manchester.ac.uk
\end{document}